\newtheorem{thm}{Theorem}[section]
\newtheorem{lem}[thm]{Lemma}
\newtheorem{prop}[thm]{Proposition}
\newtheorem{cor}[thm]{Corollary}
\newtheorem*{thm*}{Theorem}
\theoremstyle{definition}
\newtheorem{dfn}[thm]{Definition}
\newtheorem*{dfn*}{Definition}
\newtheorem{obs}[thm]{Remark}
\newtheorem*{ques*}{Question}
\newcommand{\ra}{\rightarrow}
\newcommand{\N}{\mathbb{N}}
\newcommand{\Q}{\mathbb{Q}}
\newcommand{\C}{\mathbb{C}}
\newcommand{\Z}{\mathbb{Z}}
\newcommand{\f}{f_\mathbf{n}}
\newcommand{\Wr}{\mathbb{W}_r}
\newcommand{\WrL}{\mathbb{W}_{r,\mathcal{L}}}
\title{An equivalent condition for $q$-holonomicity}
\author{Giulio Belletti}
\date{}
\begin{document}
	
	\maketitle
	
	\begin{abstract}
		We show that a sequence is $q$-holonomic if and only if it satisfies the elimination property for any subset of variables. The same result also holds for holonomic sequences. As an application, we prove several conjectured closure properties for $q$-holonomic sequences. We also prove that Jones-style sequences for links in any closed $3$-manifold are $q$-holonomic, which in turn implies that the Reshetikhin-Turaev invariants are $q$-holonomic in the colors.
	\end{abstract}

\section{Introduction}

A holonomic sequence $a_n$ in a single variable satisfies a non-trivial linear recurrence relation with polynomial coefficients: $\sum_{i=0}^k P_i(n)a_{n+i}=0$. The $q$-analog concept is a $q$-holonomic sequence: a sequence $f_n$ of elements in $\C(q)$ is $q$-holonomic if it satisfies a non-trivial linear recurrence relation with coefficients polynomials in $q$ and $q^n$: $\sum_{i=0}^k P_i(q,q^n)f_{n+i}=0$.

It is not straightforward to extend this definition to sequences in several variables. Roughly speaking, a sequence in several variables is holonomic (or $q$-holonomic) if it satisfies a maximally overdetermined system of linear recursions like above; the precise definition requires a discussion on the dimension growth of a certain filtration, which will be explained in Section \ref{sec:main}. A $q$-holonomic sequence will in particular satisfy linear recurrence relations with a shift in every individual variable, but in general this is not enough (see for example \cite[Remark 7.3]{GL16}). 

Holonomic sequences were first defined and studied in \cite{Zeil90}. In that paper, $q$-holonomic sequences were also sketched, without the theory being developed; this was done in \cite{Sab93}. Later $q$-holonomic sequences became central in the study of quantum invariants, following the proof that the colored Jones function of a link in $S^3$ is $q$-holonomic \cite{GL05} and the formulation of the AJ conjecture \cite{Gar04}.

The present paper deals with $q$-holonomic sequences, although some of it also applies to holonomic ones (chiefly Theorem \ref{thm:main}).

The gap in sophistication between $q$-holonomic sequences in one variable versus several variables makes it so that many simple properties known in the univariate case were only conjectural in general.

There are two, related reasons to prefer this more abstract definition of $q$-holonomicity, over simply requiring a recurrence relation with a shift in each individual variable (this concept is called $\Wr$-finiteness in \cite{GL16} and is related to $\partial$-finiteness of $\cite{Kou09}$). The first is that a $q$-holonomic sequence is determined by its recurrence relations plus a finite number of initial values; the second is the so-called elimination property, also known as being strongly $\Wr$-finite (see Definition \ref{dfn:stronglywr}). The former property allows proving identities in an algorithmic way; an identity $A=B$ is true if $A$ and $B$ satisfy the same set of equations and if $A=B$ for a certain set of finite values. The latter property is a crucial step in proving that many algorithms terminate (for example, the creative telescoping algorithm of \cite{Zeil91a}). A sequence that satisfies a recurrence relation in each variable will not, in general, satisfy these two properties.

The main result of this paper is the following theorem:

\newtheorem*{thm:main}{Theorem \ref{thm:main}}
\begin{thm:main}
	A sequence $f:\Z^r\ra \C(q)$ is $q$-holonomic if and only if it is strongly $\Wr$-finite.
\end{thm:main}

This bridges the aforementioned gap in sophistication; it means that a sequence is $q$-holonomic if and only if it satisfies a set of recurrence relations with a specific property. This allows us to prove several closure properties for $q$-holonomic sequences, answering conjectures from \cite{GK12} and \cite{GvdV12}. Furthermore it allows us to prove that for any framed link in a closed $3$-manifold, its colored Jones sequence (and thus its Reshetikhin-Turaev invariant) is $q$-holonomic.

In Section \ref{sec:main} we will first give all the necessary definitions and background on $q$-holonomic sequences, then we will prove our main Theorem \ref{thm:main}, and then (in Subsection \ref{ssec:holon}) we will briefly go over the holonomic case. In Section \ref{sec:appl} we will give several applications of Theorem \ref{thm:main}. First, in Subsection \ref{ssec:closure} we will prove several closure properties of $q$-holonomic sequences; finally, in Subsection \ref{ssec:skein} we will give an application to quantum invariants and skein modules.

\textbf{Acknowledgments:} I wish to thank Renaud Detcherry, Stavros Garoufalidis and Christoph Koutschan for interesting conversations. The author was supported by FNRS (grant 1.B.044.25F).

\section{Strongly $\Wr$-finite sequences and $q$-holonomicity}\label{sec:main}

\textbf{Notation:} Throughout this paper, we use boldface letters to indicate tuples; given a $2r$-tuple $\boldsymbol{\eta}=(\eta_1,\dots,\eta_{2r})$, we use $L^\eta M^\eta$ to denote the monomial $L_1^{\eta_1}\cdots L_r^{\eta_r}M_1^{\eta_{r+1}}\cdots M_r^{\eta_{2r}}$, and with $\lvert \eta\rvert$ the sum of the absolute values of all the entries of $\boldsymbol{\eta}$.

Furthermore recall that a sequence of positive numbers $f_N$ is in $O(N^k)$ if there exists a $C$ such that $f_N \leq C N^k$ for all $N$ bigger than some $N_0$, and is in $\Theta(N^k)$ if there are $C_1>0$ and $C_2$ such that $C_1 N^k \leq f_N\leq C_2N^k$ for all $N$ bigger than some $N_0$.

Finally, throughout the paper we follow the definitions and notations of \cite{GL16}, which we point to for any further reading on $q$-holonomic sequences.

\subsection{Equivalence between $q$-holonomicity and strongly $\Wr$-finiteness}

\begin{dfn}\label{dfn:qweyl}
	The \emph{quantum Weyl algebra with $2r$ generators $\Wr$} is obtained by quotienting the non-commutative $\C(q)$-algebra
	
	$$\C(q)[M_1^{\pm1},L_1^{\pm1},\dots, M_r^{\pm1},L_r^{\pm1}]$$
	
	by the ideal generated by 
	
	\begin{gather*}
		L_iM_j=q^{\delta_{ij}}M_jL_i\\
		M_iM_j=M_jM_i\\
		L_iL_j=L_jL_i.
	\end{gather*}.

	The \emph{positive quantum Weyl algebra with $2r$ generators $\mathbb{W}_{r,+}$} is obtained by quotienting the non-commutative $\C(q)$-algebra $\C(q)[L_1,M_1,\dots, L_k,M_k]$ by the same relations.
\end{dfn}

Both these algebras have a filtration given by degree, which we call $\mathcal{F}_N$:

$$\mathcal{F}_N \Wr:=\textrm{span}\langle L^{\boldsymbol{\alpha}}M^{\boldsymbol{\beta}} \textrm{ with } \lvert\alpha\rvert+\lvert\beta\rvert\leq N \rangle$$

and analogously for $\mathbb{W}_{r,+}$.
\begin{dfn}
	If $V$ is a $\C(q)$-vector space, we denote with $S_r(V)$ the space of maps $\Z^r\ra V$, and with $S_{r,+}(V)$ the space of maps $\N^r\ra V$. 
\end{dfn}

We will think of elements of $S_r(V)$ as sequences, and will write $f_{\boldsymbol{n}}$ to mean $f(\boldsymbol{n})$.

It is standard to check that $S_r(V)$ is a $\Wr$-module and $S_{r,+}(V)$ is a $\mathbb{W}_{r,+}$-module, with the action of $L_i$ and $M_i$ defined by 
\begin{align*}
	L_i(f)_{(n_1,\dots,n_k)}:= f_{(n_1,\dots,n_i+1,\dots,n_r)}\\
	M_i(f)_{(n_1,\dots,n_k)}:= q^{n_i}f_{(n_1,\dots,n_r)}.
\end{align*}

\begin{dfn}
	Let $\mathcal{M}$ be a finitely generated $\Wr$-module. By \cite[Proposition 1.5.2]{Sab93}, the dimension of $\mathcal{F}_N \Wr \cdot J$ is eventually equal to a polynomial; the degree of this polynomial is called the \emph{dimension} of $M$, denoted by $d(M)$. It was proven in \cite[Theorem 2.1.1]{Sab93} that if $d(M)<r$, then $M$ is the trivial module. The same definition of dimension can be given for $\mathbb{W}_{r,+}$-modules.
\end{dfn}

\begin{dfn}\label{dfn:qholonomic}
	A $\mathbb{W}_{r,+}$-module $M$ is \emph{$q$-holonomic} if $M$ is finitely generated, contains no monomial torsion, and $d(M)\leq r$.
	
	Similarly, a $\mathbb{W}_{r}$-module $M$ is \emph{$q$-holonomic} if $M$ is finitely generated and $d(M)\leq r$.
	
	An element $f$ in a $\Wr$ or $\mathbb{W}_{r,+}$-module $M$ is \emph{$q$-holonomic} if it spans a $q$-holonomic module. If $f\in S_r(V)$ or $f\in S_{r,+}(V)$, we say $f$ is a $q$-holonomic sequence.
\end{dfn}
Notice that monomials in $\Wr$ act invertibly so a $\Wr$-module cannot have monomial torsion.

Notice furthermore that any $\Wr$-module is also a $\mathbb{W}_{r,+}$-module; in this case the following proposition states that the two concepts of $q$-holonomicity are equivalent.

\begin{prop}\label{prop:Wr+}\cite[Proposition 3.4]{GL16}
	Let $M$ be a $\Wr$-module and $f\in M$. Then $f$ is $q$-holonomic over $\mathbb{W}_{r,+}$ if and only if it is $q$-holonomic over $\Wr$.
\end{prop}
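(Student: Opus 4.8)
The plan is to reduce the statement to the single equality $d(\Wr\cdot f)=d(\mathbb{W}_{r,+}\cdot f)$ of dimensions. Write $\mathcal M:=\Wr\cdot f$ and $\mathcal M_+:=\mathbb{W}_{r,+}\cdot f\subseteq\mathcal M$ for the cyclic submodules generated by $f$ over the two algebras (regarding $M$ as a $\mathbb{W}_{r,+}$-module in the obvious way). These are cyclic, hence finitely generated over $\Wr$ and $\mathbb{W}_{r,+}$ respectively; moreover, since $M$ is a $\Wr$-module on which every monomial of $\mathbb{W}_{r,+}$ acts invertibly, the submodule $\mathcal M_+$ can contain no monomial torsion. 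So neither finite generation nor absence of monomial torsion obstructs either notion of $q$-holonomicity, and $f$ is $q$-holonomic over $\mathbb{W}_{r,+}$ (resp. over $\Wr$) precisely when $d(\mathcal M_+)\le r$ (resp. $d(\mathcal M)\le r$); these conditions are equivalent once $d(\mathcal M)=d(\mathcal M_+)$. (If $f=0$ both modules are trivial and there is nothing to prove, so assume $f\ne 0$.)

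One inequality is immediate: for every $N$ we have $\mathcal F_N\mathbb{W}_{r,+}\subseteq\mathcal F_N\Wr$, hence $\mathcal F_N\mathbb{W}_{r,+}\cdot f\subseteq\mathcal F_N\Wr\cdot f$ and thus $\dim_{\C(q)}\mathcal F_N\mathbb{W}_{r,+}\cdot f\le\dim_{\C(q)}\mathcal F_N\Wr\cdot f$. By \cite[Proposition 1.5.2]{Sab93} applied to the cyclic modules $\mathcal M_+$ and $\mathcal M$ with generator $f$, the left-hand side is eventually a polynomial in $N$ of degree $d(\mathcal M_+)$ and the right-hand side eventually a polynomial of degree $d(\mathcal M)$, so $d(\mathcal M_+)\le d(\mathcal M)$.

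For the reverse inequality I would argue directly with monomials, which is where the real content lies. Fix $N$ and let $u_N\in\Wr$ be the Laurent monomial $L_1^{-N}\cdots L_r^{-N}M_1^{-N}\cdots M_r^{-N}$. Every monomial $L^{\boldsymbol\alpha}M^{\boldsymbol\beta}$ with $\lvert\alpha\rvert+\lvert\beta\rvert\le N$ has all of its exponents in $[-N,N]$, and a short calculation using only the commutation relations of Definition \ref{dfn:qweyl} shows that $u_N^{-1}L^{\boldsymbol\alpha}M^{\boldsymbol\beta}$ equals a nonzero scalar in $\C(q)$ times the monomial $L^{\boldsymbol\gamma}M^{\boldsymbol\delta}$ with $\boldsymbol\gamma=\boldsymbol\alpha+(N,\dots,N)$ and $\boldsymbol\delta=\boldsymbol\beta+(N,\dots,N)$, which has nonnegative exponents and satisfies $\lvert\gamma\rvert+\lvert\delta\rvert\le CN$ for a constant $C$ depending only on $r$. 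Hence $\mathcal F_N\Wr\subseteq u_N\cdot\mathcal F_{CN}\mathbb{W}_{r,+}$, so $\mathcal F_N\Wr\cdot f\subseteq u_N\cdot\bigl(\mathcal F_{CN}\mathbb{W}_{r,+}\cdot f\bigr)$. Since $u_N$ is a unit of $\Wr$ and $M$ is a $\Wr$-module, multiplication by $u_N$ is a $\C(q)$-linear automorphism of $M$ and so preserves dimensions of subspaces; therefore $\dim_{\C(q)}\mathcal F_N\Wr\cdot f\le\dim_{\C(q)}\mathcal F_{CN}\mathbb{W}_{r,+}\cdot f=O(N^{d(\mathcal M_+)})$. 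As the left-hand side is eventually a polynomial in $N$ of degree $d(\mathcal M)$, this yields $d(\mathcal M)\le d(\mathcal M_+)$; combined with the previous paragraph, $d(\mathcal M)=d(\mathcal M_+)$, which proves the proposition.

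The main obstacle is precisely this last step: one must control the negative powers of the generators that appear when passing from $\mathbb{W}_{r,+}$ to $\Wr$. Conceptually it reflects the standard fact that $\Wr$ is the Ore localization of $\mathbb{W}_{r,+}$ at the normal element $\prod_i L_iM_i$ and that such a localization cannot increase the dimension of a finitely generated module, so that $q$-holonomicity is preserved; the explicit monomial bookkeeping above is just an elementary, self-contained substitute for the machinery of localizations of filtered modules. Everything else in the argument — finite generation, the automatic vanishing of monomial torsion, and the easy inequality — is formal.
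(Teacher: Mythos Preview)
The paper does not actually prove this proposition; it merely imports it from \cite[Proposition 3.4]{GL16}. So there is no in-paper argument to compare against, only the cited one.

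Your argument is correct and is essentially the standard one (and the one in \cite{GL16}): the nontrivial direction is handled by the monomial shift $u_N$, which translates the cube $[-N,N]^{2r}$ of exponents into $[0,2N]^{2r}$ at the cost of a scalar in $\C(q)$, giving $\mathcal F_N\Wr\subseteq u_N\cdot\mathcal F_{(2r+1)N}\mathbb W_{r,+}$; invertibility of $u_N$ on $M$ then transfers the dimension bound. The observation that monomial torsion is automatically absent because $M$ is already a $\Wr$-module is exactly the right way to dispose of that hypothesis. One cosmetic point: you could make the constant explicit as $C=2r+1$, and you might note that Sabbah's polynomiality result is being applied to both the $\Wr$- and the $\mathbb W_{r,+}$-filtration (it holds for either), but neither affects the validity of the proof.
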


The definition of $q$-holonomicity of a sequence $f\in S_r(V)$ might seem slightly obscure; in practice it means that $f$ satisfies as many recurrence relations as possible without being trivial. 

The following definition provides a possible way to make precise the concept of "satisfying as many recurrence relations as possible".

\begin{dfn}\label{dfn:stronglywr}
	An element $f$ in a $\Wr$-module $M$ is \emph{strongly $\Wr$-finite} if, for any subset $\mathcal{L}\subseteq \{L_1,\dots,L_r,M_1,\dots,M_r\}$ of cardinality exactly $r+1$, there exists a recurrence relation for $f$ in the variables contained in $\mathcal{L}$; in other words, there is an element $P\in \WrL$ such that $P\cdot f=0$.
	
	An element $f$ in a $\Wr$-module $M$ is \emph{strongly $\Wr$-finite with multiplicities} if, for any $2r$-tuple $\boldsymbol{\eta}:=(\alpha_1,\dots,\alpha_r,\beta_1,\dots,\beta_r)\in \N^{2r}$ with support of size exactly $r+1$, there is an element $P\in \mathbb{W}_{r,\mathcal{L}^\eta}$ such that $P\cdot f=0$, with $\mathcal{L}^{\eta}:=\{L_1^{\alpha_1},\dots,L_r^{\alpha_r},M_1^{\beta_1},\dots,M_r^{\beta_r}\}$.
\end{dfn}

We could also introduce strongly $\mathbb{W}_{r,+}$-finite elements, with or without multiplicities, but we omit it for brevity. It is clear though that $f$ in a $\Wr$-module is strongly $\Wr$-finite if and only if it is strongly $\mathbb{W}_{r,+}$-finite (again, with or without multiplicities); we only need to multiply by a large enough monomial to make every exponent non-negative.

The nomenclature of strongly $\Wr$-finite is from \cite{GL16}, while the same notion is called satisfying the elimination property in \cite{Zeil90}; the definition of strongly $\Wr$-finite with multiplicities is to our knowledge new, and is introduced mostly to prove the closure properties of Proposition \ref{prop:rootclosure} and \ref{prop:alphaclosure}. The fact that if $f$ is $q$-holonomic it is strongly $\Wr$-finite can be found in \cite[Theorem 7.2]{GL16} (see also Corollary \ref{cor:equivcond} for a proof), but it turns out that the converse is also true.

\begin{thm}\label{thm:main}
	If $\f\in M$ is strongly $\Wr$-finite, it is $q$-holonomic.
\end{thm}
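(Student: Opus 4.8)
The plan is to show that the $\Wr$-module $M' := \Wr \cdot \f$ generated by a strongly $\Wr$-finite sequence has dimension $d(M') \leq r$, which by Definition \ref{dfn:qholonomic} is exactly what $q$-holonomicity means (finite generation is automatic since $M'$ is cyclic). Equivalently, working with the Hilbert function, I want to bound $\dim \mathcal{F}_N \Wr \cdot \f = O(N^r)$. By Proposition \ref{prop:Wr+} I may work in the positive Weyl algebra $\mathbb{W}_{r,+}$ instead, which is more convenient because $\mathbb{W}_{r,+}$ has an actual polynomial basis of ordered monomials $L^{\boldsymbol\alpha} M^{\boldsymbol\beta}$ with $\boldsymbol\alpha,\boldsymbol\beta \in \N^r$, so $\dim \mathcal{F}_N \mathbb{W}_{r,+} = \Theta(N^{2r})$. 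The goal is thus: knowing that for each of the finitely many subsets $\mathcal{L}$ of size $r+1$ there is a recurrence $P_\mathcal{L} \cdot \f = 0$, deduce that the image of $\mathcal{F}_N \mathbb{W}_{r,+}$ in $M'$ has dimension growing like $N^r$ rather than $N^{2r}$.

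The key combinatorial step is a counting argument on monomials. For each $\mathcal{L}$ of cardinality $r+1$, the recurrence $P_\mathcal{L}$, when used as a rewriting rule, lets us express the highest-degree monomial in the $\mathcal{L}$-variables appearing in $P_\mathcal{L}$ in terms of lower ones, all of whose variables still lie in $\mathcal{L}$. Standard Gröbner-basis-style reduction then shows that $M'$ is spanned by the images of monomials $L^{\boldsymbol\alpha} M^{\boldsymbol\beta}$ that are \emph{not divisible} by the leading monomial of any $P_\mathcal{L}$. The crux is the elementary lemma: the set of $(\boldsymbol\alpha,\boldsymbol\beta) \in \N^{2r}$ avoiding, for every $(r+1)$-subset $\mathcal{L}$ of the $2r$ coordinate directions, some fixed ``box'' supported on those $r+1$ directions, has counting function $O(N^r)$. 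Intuitively, each such constraint says that in any $r+1$ of the exponents, at least one must stay bounded; so among the $2r$ exponents, at most $r$ can be simultaneously large, which caps the growth at $N^r$. I would prove this by a direct inclusion–exclusion / pigeonhole argument: stratify $\N^{2r}$ by which coordinates exceed a threshold $T$ (where $T$ is larger than all the exponents occurring in the finitely many $P_\mathcal{L}$); any stratum indexed by a subset $S$ with $|S| \geq r+1$ is empty because $S$ contains some $\mathcal{L}$ and then the box constraint for that $\mathcal{L}$ is violated; hence only strata with $|S| \leq r$ survive, and each contributes $O(N^r)$ monomials of degree $\leq N$ (the $\leq r$ unbounded coordinates range freely up to $N$, the remaining $\geq r$ coordinates are bounded by $T$).

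Making the reduction step rigorous is the part requiring the most care, and it is where I expect the main obstacle to lie. Unlike the univariate case, the leading terms of the $P_\mathcal{L}$ need not be monomials in a single monomial order, and the $P_\mathcal{L}$ do not a priori form a Gröbner basis, so one cannot naively invoke Macaulay's theorem. The clean way around this is to \emph{not} try to compute the actual span, but only to bound it: fix once and for all a monomial order (say, graded lex); for each $\mathcal{L}$ let $m_\mathcal{L}$ be the leading monomial of $P_\mathcal{L}$; then argue that $\mathcal{F}_N \mathbb{W}_{r,+} \cdot \f$ is spanned by the images of those monomials in $\mathcal{F}_N$ divisible by no $m_\mathcal{L}$ — this is a straightforward induction on the monomial order, rewriting any monomial divisible by some $m_\mathcal{L}$ using $P_\mathcal{L}$ into strictly smaller ones (one must check degrees do not increase, which holds because $P_\mathcal{L}$ is homogeneous-bounded: multiply by a suitable monomial first if needed, and note the rewriting happens inside $\mathcal{F}_N$ up to the fixed additive constant coming from $\deg P_\mathcal{L}$). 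Combining this spanning statement with the counting lemma gives $\dim \mathcal{F}_N \mathbb{W}_{r,+} \cdot \f = O(N^r)$, hence $d(M') \leq r$, hence $\f$ is $q$-holonomic. The same scheme, mutatis mutandis, will handle the ``with multiplicities'' variant, since there the leading monomials $m_{\mathcal{L}^\eta}$ are pure powers and the identical counting bound applies.
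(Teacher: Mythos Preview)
Your proof is correct and follows essentially the same strategy as the paper: work in $\mathbb{W}_{r,+}$, use the recurrences as graded-lex rewriting rules, and count the surviving monomials as $O(N^r)$. The paper organizes the reduction iteratively via an auxiliary lemma (starting from $I=\emptyset$, at each step it picks one $J$ of size $r+1$ disjoint from the already-bounded set $I$, uses $P_J$ to bound one more coordinate, and after $r$ iterations lands in a union of finitely many boxes each of size $O(N^r)$), whereas you reduce against all leading monomials $m_\mathcal{L}$ simultaneously and then count via the threshold stratification; the underlying content is the same, and your global presentation is arguably cleaner. One minor point: your caveat about degrees possibly increasing ``up to a fixed additive constant'' is unnecessary --- since the order is graded first, the leading monomial $m_\mathcal{L}$ already has maximal total degree among the terms of $P_\mathcal{L}$, so each rewrite $Y\cdot\f = \lambda Z(m_\mathcal{L}-P_\mathcal{L})\cdot\f$ stays inside $\mathcal{F}_N$ exactly.
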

\begin{proof}
	For simplicity we prove that $\f$ is $q$-holonomic over $\mathbb{W}_{r,+}$; this is enough by Proposition \ref{prop:Wr+}.
	First we fix an ordering $>$ on the variables $L_1,\dots,L_r,M_1,\dots,M_r$; we assume $L_i>M_j$ for all $i,j$, $L_i>L_j$ if $i<j$ and $M_i>M_j$ if $i<j$, but any order will do. We extend this to a monomial ordering first by total degree, and then lexicographically. Second, we say that an $n$-tuple $\eta_1$ \emph{dominates} another $\eta_2$, denoted by $\eta_1\succeq\eta_2$, if each component of $\eta_1$ is larger or equal than the corresponding component of $\eta_2$; we write $\eta_1\nsucceq\eta_2$ if this does not happen. 
	
	Consider the filtration $\mathcal{F}_N\mathbb{W}_{r,+}\cdot f$ and its generating set $\{L^\eta M^\eta\cdot f, \vert\eta\vert\leq N\}$. Given a subset $I\subseteq \{L_1,\dots,L_r,M_1,\dots,M_r\}$ and a tuple $\mathbf{k}=(k_i)_{i\in I}$, denote with $\mathcal{F}_{N,\mathbf{k}}$ the subalgebra of $\mathbb{W}_{r,+}$ generated by $\{L^\eta M^\eta, \vert\eta\vert\leq N, \eta_i < k_i \textrm{ for all } i\in I\}$; it is clear that $\dim(\mathcal{F}_{N,\mathbf{k}})=\Theta(N^{2r-\lvert I\rvert})$, where $\lvert I\rvert$ is the cardinality of $I$.
	
	The statement of the theorem will follow at once from the following lemma:
	
	\begin{lem}\label{lem:aux}
		Suppose $\f$ is strongly $\Wr$-finite, $I\subseteq \{L_1,\dots,L_r,M_1,\dots,M_r\}$ has cardinality $i<r$ and $(k_i)_{i\in I}$ is any tuple of natural numbers. Then, there exists $I_1,\dots,I_{r+1}$ each of cardinality $i+1$ and each containing $I$, and tuples $\mathbf{k^j}:=(k_i^j)_{i^j\in I_j}$ of natural numbers such that $k_i^j=k_i$ if $i\in I$, such that, if $N$ is large enough,
		
		$$\mathcal{F}_{N,\mathbf{k}}\cdot \f\subseteq \sum_{j=1}^{r+1} \mathcal{F}_{N,\mathbf{k^j}}\cdot \f$$
		
		(note that the sum is intended as sets).
	\end{lem}
	
	\begin{proof}
		Consider $J\subseteq \{L_1,\dots,L_r,M_1\dots,M_r\}\setminus I$ of cardinality $r+1$; this exists by assumption on the size of $I$. Because $\f$ is strongly $\Wr$-finite, there must be $P\in \mathbb{W}_{r,J}$ such that $P\cdot\f=0$; we can (up to multiplying by a monomial) assume that all the degrees are non-negative. Call $X$ the largest monomial in $P$ and $\boldsymbol{\xi}$ its multidegree; rescale $P$ so that the coefficient of $X$ is $1$. We claim that $\mathcal{F}_{N,\mathbf{k}}\cdot \f$ is spanned by \begin{equation}\label{elements}\{L^{\mathbf{\eta}}M^\mathbf{\eta}\cdot \f,  \vert\eta\vert\leq N, \eta_i < k_i \textrm{ for all } i\in I\textrm{ and }\boldsymbol{\eta}\nsucceq \boldsymbol{\xi}\}.\end{equation}
		
		Indeed, suppose $Y$ is a monomial in $\mathcal{F}_{N,\mathbf{k}}$ such that $Y\cdot \f$ is not contained in the span of the elements \eqref{elements}, and call $\boldsymbol{\eta}$ its multidegree. We can assume that $Y$ is minimal among monomials with this property. Clearly $\boldsymbol{\eta}\succeq \boldsymbol{\xi}$, otherwise it would be among the generators of the space. Therefore, $Y=\lambda Z X$, where $\lambda\in \C(q)$ and $Z$ is some other monomial, which implies that $Y\cdot \f=\lambda Z (X-P)\cdot \f$ and all the monomials in $Z(X-P)$ must be smaller than $Y$ (and thus belong to \eqref{elements}).
		
		Now, the sets $I_j$ are obtained by adding to $I$ each element of $J$, and the associated tuple $\mathbf{k}^j$ is obtained by adding the degree of $Y$ in the variable $j$. It is then clear that the span of \eqref{elements} is contained in $\sum_{j=1}^{r+1} \mathcal{F}_{N,\mathbf{k^j}}\cdot \f.$
	\end{proof}
	
	Given Lemma \ref{lem:aux}, it is clear that if $\f$ is strongly $\Wr$-finite, $\mathcal{F}_N \mathbb{W}_{r,+}\cdot\f$ is generated by a union of polynomially many (in $r$) sets, each of cardinality $O(N^r)$, and thus $\f$ is $q$-holonomic.
\end{proof}

The following corollary provides the equivalence among the three notions of finiteness considered in this paper.

\begin{cor}\label{cor:equivcond}
	The following are equivalent:
	\begin{enumerate}
		\item \label{item:swrf} $\f$ is strongly $\Wr$-finite;
		\item \label{item:qh} $\f$ is $q$-holonomic;
		\item \label{item:swrfm} $\f$ is strongly $\Wr$-finite with multiplicities.
	\end{enumerate}
\end{cor}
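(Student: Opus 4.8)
The plan is to establish the three equivalences by connecting the new Theorem~\ref{thm:main} with the known implication from the literature. The cycle \eqref{item:swrfm}$\Rightarrow$\eqref{item:swrf}$\Rightarrow$\eqref{item:qh}$\Rightarrow$\eqref{item:swrfm} seems the most economical route. The middle implication \eqref{item:swrf}$\Rightarrow$\eqref{item:qh} is exactly Theorem~\ref{thm:main}, so nothing remains to do there. The first implication \eqref{item:swrfm}$\Rightarrow$\eqref{item:swrf} should be immediate: given a subset $\mathcal{L}\subseteq\{L_1,\dots,L_r,M_1,\dots,M_r\}$ of cardinality $r+1$, one simply picks the $2r$-tuple $\boldsymbol{\eta}$ whose support is the index set of $\mathcal{L}$ with every nonzero entry equal to $1$, so that $\mathcal{L}^\eta=\mathcal{L}$ and a recurrence $P\in\mathbb{W}_{r,\mathcal{L}^\eta}\cdot\f=0$ is literally a recurrence in the variables of $\mathcal{L}$.

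The remaining implication \eqref{item:qh}$\Rightarrow$\eqref{item:swrfm} is the one requiring genuine argument, and it is a mild generalization of \cite[Theorem 7.2]{GL16} (the statement that $q$-holonomic implies strongly $\Wr$-finite). First I would fix a $2r$-tuple $\boldsymbol{\eta}=(\alpha_1,\dots,\alpha_r,\beta_1,\dots,\beta_r)\in\N^{2r}$ with support of size exactly $r+1$, and set $\mathcal{L}^\eta=\{L_1^{\alpha_1},\dots,L_r^{\alpha_r},M_1^{\beta_1},\dots,M_r^{\beta_r}\}$ (ignoring the entries where the exponent is zero). The subalgebra $\mathbb{W}_{r,\mathcal{L}^\eta}$ generated by these $r+1$ monomials has its own degree filtration $\mathcal{F}_N\mathbb{W}_{r,\mathcal{L}^\eta}$, and since each generator is a fixed monomial of bounded degree, $\dim\mathcal{F}_N\mathbb{W}_{r,\mathcal{L}^\eta}=\Theta(N^{r+1})$. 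On the other hand, because $\f$ is $q$-holonomic, $\dim(\mathcal{F}_N\mathbb{W}_{r,+}\cdot\f)=O(N^r)$. Composing, $\dim(\mathcal{F}_N\mathbb{W}_{r,\mathcal{L}^\eta}\cdot\f)=O(N^r)$ as well, since $\mathcal{F}_N\mathbb{W}_{r,\mathcal{L}^\eta}\cdot\f\subseteq\mathcal{F}_{CN}\mathbb{W}_{r,+}\cdot\f$ for a constant $C$ depending only on the degrees of the chosen monomials. The map $\mathcal{F}_N\mathbb{W}_{r,\mathcal{L}^\eta}\to\mathcal{F}_N\mathbb{W}_{r,\mathcal{L}^\eta}\cdot\f$ sending $P\mapsto P\cdot\f$ therefore has a kernel whose dimension grows like $\Theta(N^{r+1})-O(N^r)$, which is strictly positive for $N$ large; any nonzero element of this kernel is the desired $P\in\mathbb{W}_{r,\mathcal{L}^\eta}$ with $P\cdot\f=0$. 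This same counting argument, specialized to all exponents equal to $1$, also supplies the promised alternative proof that $q$-holonomic implies strongly $\Wr$-finite, which can be remarked in passing.

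The main obstacle, such as it is, is bookkeeping: making sure the comparison $\mathcal{F}_N\mathbb{W}_{r,\mathcal{L}^\eta}\cdot\f\subseteq\mathcal{F}_{CN}\mathbb{W}_{r,+}\cdot\f$ is stated with an honest constant, and checking that $\mathbb{W}_{r,\mathcal{L}^\eta}$ is genuinely a commutative polynomial-type algebra in $r+1$ variables (the monomials $L_i^{\alpha_i}$ and $M_j^{\beta_j}$ still satisfy $q$-commutation relations among themselves, but they are finitely many and mutually $q$-commuting up to a power of $q$, so the Hilbert-function estimate $\Theta(N^{r+1})$ still holds exactly as for $\mathbb{W}_{r,+}$ itself). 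No deep new idea is needed beyond the dimension-counting already implicit in \cite{GL16}; the point of the corollary is simply to record that Theorem~\ref{thm:main} closes the loop.
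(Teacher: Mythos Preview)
Your proposal is correct and follows essentially the same cycle \eqref{item:swrfm}$\Rightarrow$\eqref{item:swrf}$\Rightarrow$\eqref{item:qh}$\Rightarrow$\eqref{item:swrfm} as the paper, with the same dimension-counting argument for \eqref{item:qh}$\Rightarrow$\eqref{item:swrfm}. The only cosmetic difference is that the paper works with the ambient filtration $\mathcal{F}_N\Wr$ restricted to $\mathbb{W}_{r,\mathcal{L}^\eta}$ rather than the intrinsic generator-degree filtration you describe, which spares it the constant $C$; either choice works.
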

\begin{proof}
	The fact that \eqref{item:swrfm} implies \eqref{item:swrf} is obvious from the definition.
	
	The fact that \eqref{item:qh} implies \ref{item:swrfm} is very similar to the proof of the fact that \eqref{item:qh} implies \eqref{item:swrf} which appears in \cite[Theorem 7.2]{GL16}. Let $\boldsymbol{\eta}\in \Z^{2r}$ with support of size $r+1$; we first notice that $d(\mathbb{W}_{r,\mathcal{L}^\eta})=r+1$, or in other words, $\dim(\mathcal{F}_N\mathbb{W}_{r,\mathcal{L}^\eta})=\Theta(N^{r+1})$. By the $q$-holonomicity assumption, $\dim(\mathcal{F}_N\Wr\cdot f)=O(N^r)$, which means that for $N$ big enough, there must be a non-zero $P\in \mathcal{F}_N\mathbb{W}_{r,\mathcal{L}^\eta}$ such that $P\cdot \f=0$.
	
	Finally, the fact that \eqref{item:swrf} implies \eqref{item:qh} is the content of Theorem \ref{thm:main}.
	
\end{proof}

\subsection{Strongly $A_r$-finite sequences and holonomicity}\label{ssec:holon}

In this subsection we repeat the previous constructions, definitions and proofs for the classical case of holonomic sequences, introduced in \cite{Zeil90}. For simplicity we only look at sequences indexed by natural numbers, rather than integers, with the understanding that everything carries over without any problem to the $\Z$ case. We will mainly use these results to prove Proposition \ref{prop:evalclosure}, stating that evaluations of $q$-holonomic sequences at roots of unity give holonomic sequences.

\begin{dfn}\label{dfn:weylalg}
	The Weyl algebra $A_r$ is the free commutative algebra generated by $l_1,\dots,l_r,m_1,\dots,m_r$ with the relations $l_im_j-m_jl_i=\delta_{ij}$, $m_im_j-m_jm_i=0$ and $l_il_j-l_jl_i=0$. The set of sequences $S_{r,+}(\C):=\{\N^r\ra \C\}$ is an $A_r$-module, with action given by $(m_j\cdot f)_\mathbf{n}=n_j f_{\mathbf{n}}$ and by $(l_i\cdot f)_\mathbf{n}=f_{\mathbf{n}+\delta_i}$.
\end{dfn}
	
	We can define a filtration $\mathcal{F}_N$ on $A_r$ exactly as before by taking polynomials in the generators of total degree less than or equal to $N$.
	
\begin{dfn}\label{dfn:holonomic} We say that an $A_r$-module $M$ is \emph{holonomic} if $M$ is finitely generated as an $A_r$-module and if $dim(\mathcal{F}_NA_r\cdot J)=O(N^r)$ for a finite generating set $J$ of $M$. A sequence $f$ is holonomic if it spans a holonomic cyclic module in $S_r(\C)$.\end{dfn}
	
	\begin{dfn}
		An element $f$ in a $A_r$-module $M$ is \emph{strongly $A_r$-finite} (or \emph{satisfies the elimination property}) if, for any subset $\mathcal{L}\subseteq \{l_1,\dots,l_r,m_1,\dots,m_r\}$ of cardinality exactly $r+1$, there exists a recurrence relation for $f$ in the variables contained in $\mathcal{L}$; in other words, there is an element $P\in A_r$ only involving the variables in $\mathcal{L}$ such that $P\cdot f=0$.
\end{dfn}

The proof of the following theorem is exactly the same as the proof of Theorem \ref{thm:main} and Corollary \ref{cor:equivcond}.

\begin{thm}\label{thm:holonomic}
	A sequence $f\in S_{r,+}(\C)$ is strongly $A_r$-finite if and only if it is holonomic.
\end{thm}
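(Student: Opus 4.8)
The plan is to mirror, line by line, the argument given for Theorem \ref{thm:main} and Corollary \ref{cor:equivcond}, translating the quantum Weyl algebra $\mathbb{W}_{r,+}$ into the ordinary Weyl algebra $A_r$ and the monomials $L^\eta M^\eta$ into the monomials $l^\alpha m^\beta$. The two directions split as follows.

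For the direction ``holonomic $\Rightarrow$ strongly $A_r$-finite'', I would repeat the dimension-counting argument of Corollary \ref{cor:equivcond}: for a subset $\mathcal{L}\subseteq\{l_1,\dots,l_r,m_1,\dots,m_r\}$ of cardinality $r+1$, the subalgebra of $A_r$ generated by $\mathcal{L}$ has $\dim(\mathcal{F}_N)=\Theta(N^{r+1})$, while holonomicity gives $\dim(\mathcal{F}_N A_r\cdot f)=O(N^r)$; hence for $N$ large the map from (the degree-$\le N$ part of) this subalgebra into $\mathcal{F}_N A_r\cdot f$ has nontrivial kernel, producing a nonzero $P$ in the variables of $\mathcal{L}$ with $P\cdot f=0$. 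Nothing quantum is used here, so this transfers verbatim.

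For the converse ``strongly $A_r$-finite $\Rightarrow$ holonomic'', I would reproduce Lemma \ref{lem:aux} with $A_r$ in place of $\mathbb{W}_{r,+}$. Fix a monomial ordering on $A_r$ (total degree, then lexicographic). Given $I$ of size $i<r$ and a tuple $(k_i)_{i\in I}$, pick $J\subseteq\{l_1,\dots,l_r,m_1,\dots,m_r\}\setminus I$ of size $r+1$; by strong $A_r$-finiteness there is a nonzero $P\in A_r$ in the variables of $J$ with $P\cdot f=0$. Let $X$ be the leading monomial of $P$ (coefficient normalized to $1$) with multidegree $\boldsymbol\xi$. Then $\mathcal{F}_{N,\mathbf{k}}\cdot f$ is spanned by the $l^\alpha m^\beta\cdot f$ with $|\alpha|+|\beta|\le N$, the $I$-exponents below $(k_i)$, and multidegree not dominating $\boldsymbol\xi$: indeed, taking a minimal offending monomial $Y$, we have $Y\succeq X$, so $Y=\lambda ZX$ for a monomial $Z$, and $Y\cdot f=\lambda Z(X-P)\cdot f$ with every monomial of $Z(X-P)$ strictly smaller than $Y$ in the ordering — contradiction. (One checks the leading-monomial relation behaves the same way: in $A_r$ the product of leading monomials is still the leading monomial of the product, since the lower-order correction terms from the commutators $l_im_i-m_il_i=1$ strictly drop total degree.) Adjoining each element of $J$ to $I$ gives the $r+1$ sets $I_j$, with $\mathbf{k}^j$ recording the degree of $Y$ in the new variable, and the inclusion $\mathcal{F}_{N,\mathbf{k}}\cdot f\subseteq\sum_{j=1}^{r+1}\mathcal{F}_{N,\mathbf{k}^j}\cdot f$ follows. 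Iterating from $I=\emptyset$ down through cardinalities $0,1,\dots,r-1$ expresses $\mathcal{F}_N A_r\cdot f$ as a union of polynomially many sets each of size $O(N^r)$, so $d(A_r\cdot f)\le r$; since $A_r\cdot f$ is cyclic it is finitely generated, hence holonomic.

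The only point needing genuine (if minor) care — the ``main obstacle'', such as it is — is confirming that the leading-term calculus used in Lemma \ref{lem:aux} is unaffected by passing from the $q$-commutation $l_im_j=q^{\delta_{ij}}m_jl_i$ to the Weyl commutation $l_im_j-m_jl_i=\delta_{ij}$: in the quantum case reordering a monomial only multiplies by a power of $q$, whereas in $A_r$ it introduces strictly lower-degree terms. Since the monomial ordering refines total degree, these extra terms are automatically below $Y$, so the minimality argument goes through unchanged. Everything else is a direct transcription, which is why the theorem can be asserted with the stated one-line justification.
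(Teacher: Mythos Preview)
Your proposal is correct and matches the paper's approach exactly: the paper's proof of Theorem \ref{thm:holonomic} is the single sentence ``the proof is exactly the same as the proof of Theorem \ref{thm:main} and Corollary \ref{cor:equivcond}'', and you have faithfully carried out that transcription. You have also correctly isolated the only genuine point of difference---that the Weyl commutator $l_im_i-m_il_i=1$ produces lower-degree correction terms rather than a scalar---and verified that, because the monomial order refines total degree and the correction terms only decrease exponents componentwise, both the minimality argument and the membership in $\mathcal{F}_{N,\mathbf{k}}$ survive unchanged.
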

	
The fact that a holonomic sequence is strongly $A_r$-finite is well known; it was first proved in \cite[Lemma 4.1]{Zeil90} in the differential case (the discrete case follows at once, see for example \cite[Proposition 26]{CK19}). The converse is, to the best of our knowledge, a new result.
\section{Applications}\label{sec:appl}

In this section we provide some applications for Theorem \ref{thm:main}. The first set of applications will be to prove some closure properties for $q$-holonomic sequences; the proofs will essentially be straightforward generalizations of the analogous proofs for sequences in one variable, made possible by Theorem \ref{thm:main}. The second application will be proving that certain sequences of elements in skein modules are $q$-holonomic.	

\subsection{Closure properties for strongly $\Wr$-finite sequences}\label{ssec:closure}
The univariate case of the following two closure properties appears in \cite{GK12} as Theorems 1 and 3; the multivariate case follows by similar reasoning after applying Theorem \ref{thm:main}. This answers Conjecture 5 in the same paper.

\begin{prop}\label{prop:rootclosure}
	Suppose $f(q)$ is a $q$-holonomic sequence of rational functions in $q$, and $\omega$ is a root of unity; then $f_\omega:=f(\omega q)$ is also $q$-holonomic.
\end{prop}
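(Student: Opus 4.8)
The plan is to bound the Hilbert function of the cyclic $\Wr$-module generated by $f_\omega$ directly in terms of that of $\Wr\cdot f$; this already yields $q$-holonomicity of $f_\omega$, and at the end I indicate how the same bookkeeping instead furnishes the strong $\Wr$-finiteness needed to invoke Theorem~\ref{thm:main}, matching the route announced in the paper. Let $d$ be the order of $\omega$ and let $\sigma\colon\C(q)\to\C(q)$ be the field automorphism $q\mapsto\omega q$, so $\sigma$ has order $d$ and $f_\omega$ is obtained from $f$ by applying $\sigma$ to each entry; writing $g^{\sigma}$ for the sequence $\mathbf{n}\mapsto\sigma(g_{\mathbf{n}})$, we have $f_\omega=f^{\sigma}$ and $f=f_\omega^{\sigma^{-1}}$. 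The map $g\mapsto g^{\sigma^{-1}}$ is a $\sigma^{-1}$-semilinear bijection of $S_r(\C(q))$, hence carries a $\C(q)$-subspace to a $\C(q)$-subspace of the same dimension. For $\boldsymbol{\delta}\in\Z^r$, write $\epsilon^{\boldsymbol{\delta}}$ for the operator on $S_r(\C(q))$ given by pointwise multiplication by $\mathbf{n}\mapsto\omega^{\delta_1 n_1+\dots+\delta_r n_r}$; since $\omega$ is a root of unity, $\epsilon^{\boldsymbol{\delta}}$ is an invertible $\C(q)$-linear map depending only on $\boldsymbol{\delta}$ modulo $d$, so it preserves $\C(q)$-dimensions.

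Next I would record the key computation: for every monomial $L^{\boldsymbol{\alpha}}M^{\boldsymbol{\beta}}$ of $\Wr$ and every $h\in S_r(\C(q))$,
$$\bigl(L^{\boldsymbol{\alpha}}M^{\boldsymbol{\beta}}\,h\bigr)^{\sigma^{-1}}=\epsilon^{-\boldsymbol{\beta}}\,L^{\boldsymbol{\alpha}}M^{\boldsymbol{\beta}}\,h^{\sigma^{-1}},$$
which is immediate on unwinding the definitions: $\bigl(L^{\boldsymbol{\alpha}}M^{\boldsymbol{\beta}}h\bigr)_{\mathbf{n}}=q^{\beta_1 n_1+\dots+\beta_r n_r}h_{\mathbf{n}+\boldsymbol{\alpha}}$, and applying $\sigma^{-1}$ turns the monomial $q^{\beta_1 n_1+\dots+\beta_r n_r}$ into $(\omega^{-1}q)^{\beta_1 n_1+\dots+\beta_r n_r}$, producing the extra factor $\omega^{-(\beta_1 n_1+\dots+\beta_r n_r)}=\epsilon^{-\boldsymbol{\beta}}$. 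Taking $h=f_\omega$ and using $f_\omega^{\sigma^{-1}}=f$ gives $\bigl(L^{\boldsymbol{\alpha}}M^{\boldsymbol{\beta}}f_\omega\bigr)^{\sigma^{-1}}=\epsilon^{-\boldsymbol{\beta}}L^{\boldsymbol{\alpha}}M^{\boldsymbol{\beta}}f$.

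It then follows, since $\epsilon^{-\boldsymbol{\beta}}$ depends only on $\boldsymbol{\beta}$ modulo $d$, that
$$\bigl(\mathcal{F}_N\Wr\cdot f_\omega\bigr)^{\sigma^{-1}}=\textrm{span}_{\C(q)}\bigl\{\epsilon^{-\boldsymbol{\beta}}L^{\boldsymbol{\alpha}}M^{\boldsymbol{\beta}}f\ :\ \lvert\alpha\rvert+\lvert\beta\rvert\leq N\bigr\}\ \subseteq\ \sum_{\boldsymbol{\gamma}\in\{0,\dots,d-1\}^r}\epsilon^{\boldsymbol{\gamma}}\bigl(\mathcal{F}_N\Wr\cdot f\bigr).$$
Each $\epsilon^{\boldsymbol{\gamma}}$ is a $\C(q)$-linear automorphism of $S_r(\C(q))$, so the right-hand side has $\C(q)$-dimension at most $d^{r}\dim_{\C(q)}\bigl(\mathcal{F}_N\Wr\cdot f\bigr)$, and hence $\dim_{\C(q)}\bigl(\mathcal{F}_N\Wr\cdot f_\omega\bigr)\leq d^{r}\dim_{\C(q)}\bigl(\mathcal{F}_N\Wr\cdot f\bigr)$. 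Since $f$ is $q$-holonomic the latter is $O(N^{r})$, so $d(\Wr\cdot f_\omega)\leq r$; as $\Wr\cdot f_\omega$ is cyclic, hence finitely generated, $f_\omega$ is $q$-holonomic.

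The only place I expect real care to be needed is the handling of the periodic operators $\epsilon^{\boldsymbol{\gamma}}$: they are not themselves $q$-holonomic sequences, and their intrusion is exactly why one cannot simply substitute $q\mapsto\omega q$ into a recurrence for $f$ and read off one for $f_\omega$. What makes the estimate go through is that they act as dimension-preserving bijections of the ambient sequence space and are controlled by the fixed modulus $d$. If one prefers to route the argument through Theorem~\ref{thm:main}, the same phenomenon appears in a different dress: for a fixed $\mathcal{L}$ of cardinality $r+1$, a recurrence $P\in\WrL$ with $P\cdot f=0$ becomes, after $q\mapsto\omega q$, a relation for $f_\omega$ in the same variables but with coefficients that are $q$-polynomials times functions of $\mathbf{n}$ of period $d$; splitting according to the residue of $\mathbf{n}$ modulo $d$ and recombining — exactly as in the univariate argument of \cite{GK12} — clears the periodic factors and produces an honest $P'\in\WrL$ with $P'\cdot f_\omega=0$, so that $f_\omega$ is strongly $\Wr$-finite and Theorem~\ref{thm:main} again gives $q$-holonomicity.
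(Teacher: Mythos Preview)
Your main argument is correct and takes a genuinely different route from the paper. You bound the Hilbert function of $\Wr\cdot f_\omega$ directly: the $\sigma^{-1}$-semilinear bijection $g\mapsto g^{\sigma^{-1}}$ carries $\mathcal{F}_N\Wr\cdot f_\omega$ onto a subspace of $\sum_{\boldsymbol{\gamma}}\epsilon^{\boldsymbol{\gamma}}(\mathcal{F}_N\Wr\cdot f)$, and since there are only $d^r$ twist operators you obtain $\dim(\mathcal{F}_N\Wr\cdot f_\omega)\leq d^r\dim(\mathcal{F}_N\Wr\cdot f)=O(N^r)$ without ever invoking Theorem~\ref{thm:main}. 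The paper instead routes through Corollary~\ref{cor:equivcond}, and specifically the ``strongly $\Wr$-finite with multiplicities'' condition introduced precisely for this purpose: for each $\mathcal{L}$ of size $r+1$ it produces a recurrence $P\in\WrL$ for $f$ in which every $M_i$-exponent is a multiple of the order $p$ of $\omega$, whence $M_i^p f_\omega=(M_i^p f)(\omega q)$ makes $P(\omega q)\cdot f_\omega=0$ immediate, and Theorem~\ref{thm:main} closes. Your approach is more self-contained and yields an explicit comparison of growth rates; the paper's is shorter once the surrounding machinery is available and explains why the multiplicities notion was worth isolating. Your closing alternative sketch is the weakest part: the ``split by residues and recombine'' step, while plausible, is genuinely fiddly in several variables if one insists on staying inside $\WrL$, and the paper's use of the multiplicities criterion is exactly the device that replaces that bookkeeping with a one-line appeal.
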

\begin{proof} Suppose $\omega$ is a $p$-th root of unity; then $M_i^p f_\omega=(M_i^p f)(\omega q)$ for all $i$ and, trivially, $L_i f_\omega= (L_if)(\omega q)$, therefore if $P\cdot f=0$ and all $M_i$ powers in $P$ are divisible by $p$, $P(\omega q)\cdot f_\omega=0$. 
	
	Given any subset $\mathcal{L}$ of $\{L_1,\dots,L_r,M_1,\dots,M_r\}$ we need to find a recurrence relation for $f_\omega$; since $f$ is $q$-holonomic, by Corollary \ref{cor:equivcond} there is a $P\in \WrL$ such that all powers of $M_i$ in $P$ are divisible by $p$; this is the desired recursion.
	
\end{proof}

\begin{prop}\label{prop:alphaclosure}
	Suppose $f(q)$ is $q$-holonomic and $\alpha\in\Q$; then $f(q^\alpha)$ is also $q$-holonomic.
\end{prop}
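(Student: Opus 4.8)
The plan is to proceed exactly as in the proof of Proposition \ref{prop:rootclosure}, but with the field extension $\C(q)\hookrightarrow \C(q^{1/d})$ playing the role of the substitution $q\mapsto \omega q$. Write $\alpha = a/d$ with $a\in\Z$ and $d\in\N$. Set $t:=q^{1/d}$, so that $q = t^d$ and $q^\alpha = t^a$. The sequence $g:=f(q^\alpha)$ is then a sequence valued in $\C(t^a)\subseteq \C(t)$, and I would view it as a sequence over the larger field $\C(t)$; this is harmless, since being $q$-holonomic is insensitive to enlarging the ground field (the dimension of the filtration is unchanged), and at the end one checks that the recurrences can be chosen with coefficients back in $\C(q)$.

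First I would record the analogue of the key computation in Proposition \ref{prop:rootclosure}: since $M_i$ acts on $g$ by multiplication by $t^{a n_i}$, we have $(M_i^d \cdot g)(t) = q^{a n_i}\, g = \big((M_i^a f)(q)\big)$ evaluated at $q^\alpha$ — more precisely, if $P(q, M_1,\dots,L_r)$ annihilates $f$ and every exponent of every $M_i$ appearing in $P$ is divisible by $d$, then replacing each $M_i^{dk}$ by $M_i^{k}$ and each $q$ by $q^\alpha = t^a$ yields an operator in $\Wr$ (over $\C(t)$) annihilating $g$. Second, I would invoke Corollary \ref{cor:equivcond} in its "strongly $\Wr$-finite with multiplicities" form: given any subset $\mathcal L\subseteq\{L_1,\dots,L_r,M_1,\dots,M_r\}$ of size $r+1$, consider the $2r$-tuple $\boldsymbol\eta$ with a $1$ in each slot corresponding to an $L_i\in\mathcal L$ and a $d$ in each slot corresponding to an $M_i\in\mathcal L$ (and $0$ elsewhere); its support has size $r+1$, so there is a nonzero $P\in\mathbb W_{r,\mathcal L^\eta}$ with $P\cdot f = 0$. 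By construction every exponent of every $M_i$ in $P$ is a multiple of $d$, so the substitution above turns $P$ into a recurrence for $g$ in the variables of $\mathcal L$. Hence $g$ is strongly $\Wr$-finite, and by Theorem \ref{thm:main} it is $q$-holonomic.

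The main point requiring care — and the reason I single out the "with multiplicities" version of Corollary \ref{cor:equivcond} — is that an arbitrary annihilating operator for $f$ need not have all $M_i$-exponents divisible by $d$, so one genuinely needs the stronger input guaranteeing annihilators supported on the sublattice $d\Z$ in the relevant $M$-directions. Beyond that, the only remaining subtlety is bookkeeping: checking that the resulting operator is nonzero (immediate, since the substitution on monomials is injective), and noting that one may clear denominators to get coefficients in $\C[q^{\pm 1/d}, \dots]$ and then, if desired, observe that $g$ takes values in $\C(q^{\alpha})$ and descends appropriately. I do not expect any serious obstacle; the argument is a direct transcription of Proposition \ref{prop:rootclosure} with field extension in place of Galois twist.
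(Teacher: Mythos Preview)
Your approach is correct and follows the same underlying strategy as the paper: for each $(r{+}1)$-element subset $\mathcal L$, produce an annihilator of $f$ whose $M_i$-exponents are all divisible by the denominator $d$ of $\alpha$, then substitute $q\mapsto q^\alpha$. The execution differs in one point. You invoke Corollary~\ref{cor:equivcond} in its ``with multiplicities'' form as a black box, which gives such an annihilator with coefficients in $\C(q)$; after substitution the coefficients lie only in $\C(q^\alpha)\subseteq\C(q^{1/d})$, so you work over the extension $\C(t)=\C(q^{1/d})$ and appeal to the (correct) fact that $q$-holonomicity is insensitive to a finite base-field extension. The paper instead redoes the dimension count over the \emph{subfield} $\C(q^k)$ (with $k=d$), so that the annihilator it finds already has coefficients in $\C(q^k)$; the substituted operator then has coefficients in $\C(q^a)\subseteq\C(q)$ and no extension argument is needed. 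Your route is more modular, the paper's more self-contained; both are short.

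One caveat: the displayed formulas in your first paragraph are garbled. With the standard action ($M_i$ multiplies by $q^{n_i}$), the substitution $q\mapsto q^\alpha$ sends $M_i^{d\beta_i}$ to $M_i^{a\beta_i}$, not to $M_i^{\beta_i}$; and the claim that ``$M_i$ acts on $g$ by multiplication by $t^{an_i}$'' does not match the action in any of the Weyl algebras in play. None of this affects the argument---all that is needed is that the substituted operator is a nonzero element of $\WrL$ over the larger field, and it is---but the formulas should be corrected. Your closing remark that ``the recurrences can be chosen with coefficients back in $\C(q)$'' is not justified by your method (it is exactly what the paper's subfield argument buys), but as you note it is not needed for the conclusion.
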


Note that the sequence $f(q^\alpha)$ belongs to $S_r(\Q(q^{\alpha}))$, which is itself a $\Wr$-module.
\begin{proof}
	Let $k$ be the denominator of $\alpha$ in reduced form and let $\mathcal{L}$ be a subset of the variables of cardinality $r+1$. If there is a $P$ in the variables $\mathcal{L}$ annihilating $f$ that is a polynomial in the variables $q^k$ and $M_i^k$ for every $M_i\in \mathcal{L}$, then $P(q^\alpha)$ annihilates $f(q^\alpha)$ and therefore $f(q^\alpha)$ is $q$-holonomic.
	
	To show this, we look at the filtration of $\Wr$ given by $\textrm{span}_{\C(q^k)}\langle L^\alpha M^{k\beta}\rangle$, with the understanding that any variable not in $\mathcal{L}$ will have null exponent. The dimension of this grows like $O(N^{r+1})$. Furthermore the dimension of $\mathcal{F}_Nf$ grows like $O(N^r)$ also as a module over $\C(q^k)$, since $\C(q)$ is an extension of $\C(q^k)$ of finite degree equal to $k$. Then just as in the proof of Corollary \ref{cor:equivcond} we can find a recurrence relation for $f$ with the required characteristics.
	
\end{proof}

We denote by $D_q$ the operator on $S_r(\C(q))$ acting by derivative in $q$; in other words, $(D_q\cdot f)_{\mathbf{n}}:= \frac{d}{dq}f_\mathbf{n}$.

The following two closure properties in the univariate case appear in \cite{GvdV12} as Theorem 1.4 and 1.5, and in the subsequent remark they are conjectured to hold in general. Once again, the proof of the multivariate case follows a similar reasoning as the univariate case after applying Theorem \ref{thm:main}
\begin{prop}\label{prop:diffclosure}
	Suppose $f$ is $q$-holonomic; then $D_q f$ is also $q$-holonomic.
\end{prop}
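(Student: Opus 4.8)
The plan is to use Corollary \ref{cor:equivcond}, which reduces the task to showing that $D_q f$ is strongly $\Wr$-finite: for every subset $\mathcal{L}\subseteq\{L_1,\dots,L_r,M_1,\dots,M_r\}$ of cardinality exactly $r+1$, I must produce a nonzero $Q\in\WrL$ with $Q\cdot(D_q f)=0$. The key observation is that $D_q$ does not commute with the generators of $\Wr$, but the failure to commute is controlled: $D_q$ commutes with each $L_i$, while $D_q\circ M_i = M_i\circ D_q + \frac{d}{dq}(q^{n_i})$ applied termwise, i.e. $D_q M_i = M_i D_q + n_i q^{n_i-1}$-type correction. The cleaner way to phrase this is that $M_i D_q - D_q M_i = (q \partial_q M_i)$ up to the right normalization; concretely, for a monomial $M^{\boldsymbol\beta}$ one has $D_q M^{\boldsymbol\beta} = M^{\boldsymbol\beta}(D_q + \sum_j \beta_j n_j q^{-1}\,\text{(as a multiplication operator)})$ — and crucially $n_j q^{n_j} = q\frac{d}{dq}q^{n_j}$, so multiplication by $n_j$ is itself expressible via $D_q$ and $M_j$. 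This is exactly the mechanism that makes the univariate argument of \cite{GvdV12} work.

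The concrete strategy I would follow: fix $\mathcal{L}$ and let $P\in\WrL$ with $P\cdot f=0$, which exists by Corollary \ref{cor:equivcond}. Apply $D_q$ to the identity $P\cdot f = 0$ to get $D_q(P\cdot f)=0$, and expand using the (twisted) Leibniz rule. This yields $P\cdot(D_q f) = -(D_q P)\cdot f$, where $D_q P$ means the operator obtained by differentiating the coefficients of $P$ (which lie in $\C(q)$) plus the commutator corrections coming from moving $D_q$ past the $M_i$'s. The right-hand side $(D_q P)\cdot f$ is again a $\WrL$-operator — possibly with higher $q$-degree coefficients, but still only in the variables of $\mathcal{L}$ — applied to $f$. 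Now I want to iterate: consider the $\C(q)$-span $W_\mathcal{L}$ of $\{(D_q^j f): j\ge 0\}$ together with the action of $\WrL$. By repeatedly differentiating and using that $f$ satisfies the single recurrence $P\cdot f=0$ in the $\mathcal{L}$-variables plus the full set of recurrences from $q$-holonomicity, one shows the module generated by $f$ and all its $D_q$-derivatives inside an appropriate $\C(q)\langle\WrL, D_q\rangle$-type algebra is still "small" — precisely, the filtration dimension count $\dim(\mathcal{F}_N\WrL\cdot f + \mathcal{F}_N\WrL\cdot D_qf + \cdots)$ stays $O(N^r)$ while $\dim\mathcal{F}_N\WrL = \Theta(N^{r+1})$, forcing a nonzero annihilator of $D_q f$ in $\WrL$ for $N$ large. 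The cleanest packaging: show $f$ and $D_q f$ together span a $q$-holonomic module over the Weyl-type algebra generated by $\Wr$ and $D_q$, then restrict.

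More carefully, the slick route (mirroring \cite{GvdV12}) is: the algebra $\Wr$ acts on $S_r(\C(q))$, and $D_q$ together with $\Wr$ generates a larger algebra $\mathcal{A}$ in which $D_q$ is, up to the relation $q\partial_q M_i = M_i \cdot(\text{shift-monomial operator applied to }n_i)$, an "extra variable" — but the point is that $D_q f$ lies in the $\Wr$-submodule of $S_r(\C(q)\oplus \C(q)\,dq)$-valued sequences generated by $f$ after extending scalars. Since $f$ is $q$-holonomic, this extension (being a finite-rank construction, just adjoining one derivative with a controlled commutation rule) preserves the dimension bound $d(\cdot)\le r$; then $D_q f$, being an element of a $q$-holonomic module, is $q$-holonomic by the standard fact that submodules and elements of $q$-holonomic modules are $q$-holonomic (or directly, one re-derives the $O(N^r)$ filtration bound). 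I expect the main obstacle to be precisely the bookkeeping of the commutation relation between $D_q$ and the $M_i$: unlike the univariate case where there is a single $M$, here moving $D_q$ past a product $M^{\boldsymbol\beta}$ generates a sum $\sum_j \beta_j n_j q^{-1}M^{\boldsymbol\beta}$ of correction terms, and one must verify that each such $n_j$-multiplication is absorbable — this works because $n_j q^{n_j-1} = q^{-1} D_q(q^{n_j})$ so multiplication by $n_j$ equals $q^{-1}M_j^{-1}D_q M_j - q^{-1}D_q$ as operators on sequences, keeping everything inside the algebra generated by $\Wr$ and $D_q$ without introducing genuinely new variables. Once that closure of the ambient algebra under the needed operations is checked, the dimension-counting argument goes through verbatim as in the proof of Theorem \ref{thm:main}.
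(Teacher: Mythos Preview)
Your opening computation is correct and matches the paper's: applying $D_q$ to $P\cdot f=0$ gives $P\cdot(D_qf)=-R$, where $R$ is a $\C(q)$-linear combination of sequences of the form $\bigl(k+\sum_j\beta_j n_j\bigr)q^{k-1}L^{\boldsymbol\alpha}M^{\boldsymbol\beta}f$. But your next claim, that $R$ is ``a $\WrL$-operator applied to $f$'', is false: the factor $n_j$ (multiplication by the $j$-th index) is \emph{not} an element of $\Wr$, and this is precisely the obstruction. Your attempt to repair this by writing $n_j$ in terms of $M_j^{\pm1}$ and $D_q$ is algebraically correct, but it reintroduces $D_q$ and leads you into the enlarged algebra $\langle\Wr,D_q\rangle$, for which you then assert a filtration bound $\dim\bigl(\sum_{j\ge0}\mathcal{F}_N\WrL\cdot D_q^j f\bigr)=O(N^r)$ without justification. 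That sum is over infinitely many $j$, the commutation relations in $\langle\Wr,D_q\rangle$ mix the $q$-Weyl and classical Weyl structures (since $[L_i,n_j]=\delta_{ij}L_i$), and nothing you have written controls the growth; this is a genuine gap.

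The paper avoids the operator algebra entirely at this step. Instead of trying to realize $R$ as $(\text{operator})\cdot f$, it observes that $R$ is $q$-holonomic \emph{as a sequence}: each summand is the product of a polynomial in $\mathbf{n}$ (which is $q$-holonomic) with a $\Wr$-translate of $f$ (also $q$-holonomic), and the standard closure of $q$-holonomic sequences under sums and products applies. Hence by Corollary~\ref{cor:equivcond} the sequence $R$ itself admits a nonzero annihilator $Q\in\WrL$. Then $QP\in\WrL$ is nonzero (since $\Wr$ is a domain) and $QP\cdot(D_qf)=-Q\cdot R=0$, giving the required recurrence in the variables $\mathcal{L}$. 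The point you missed is that one should treat $R$ as a $q$-holonomic \emph{sequence} and invoke closure under products, rather than chase it through an extended operator algebra.
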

\begin{proof}
	Consider any monomial $q^kL^\mathbf{\alpha}M^\mathbf{\beta}$. A straightforward calculation shows that $D_q \cdot \left(q^kL^\mathbf{\alpha}M^\mathbf{\beta}\cdot f\right)_\mathbf{n}=D_q\cdot q^k q^{\mathbf{\beta}\cdot\mathbf{n}}f_{\mathbf{n}+\mathbf{\alpha}}$ is equal to $$q^k q^{\mathbf{\beta}\cdot\mathbf{n}}D_q\cdot f_{\mathbf{n}+\mathbf{\alpha}}+\left(k+\sum_{j=1}^r n_j\beta_j\right)q^{k+\mathbf{\beta}\cdot\mathbf{n}-1}f_{\mathbf{n}}$$ which is in turn equal to $$\left(q^k L^\alpha M^\beta\cdot D_q f\right)_\mathbf{n}+\left(k+\sum_{j=1}^r n_j\beta_j\right)q^{k-1}L^\alpha M^\beta f_{\mathbf{n}}$$.
	
	Therefore, given a subset $I\subseteq \mathcal{L}$ and a recurrence relation $P$ in those variables, we have that $0=D_q (P \cdot f)_\mathbf{n}=(P\cdot D_qf)_\mathbf{n}+R$ where $R$ is a linear combination of sequences of the form $\left(k+\sum_{j=1}^r n_j\beta_j\right)q^{k-1}L^\alpha M^\beta f_{\mathbf{n}}$. Standard closure properties for $q$-holonomic sequences imply that each of these sequences is $q$-holonomic, therefore there must be a recurrence relation for $R$ in the variables $I$. Multiplying this recurrence relation by $P$ on the right provides a recurrence relation for $D_qf$ in the variables $I$, proving that $D_qf$ is strongly $\Wr$-finite and hence $q$-holonomic.
\end{proof}
\begin{prop}\label{prop:evalclosure}
	Suppose $f$ is $q$-holonomic and $\omega$ is a root of unity; then $f(\omega)$ is a holonomic sequence.
\end{prop}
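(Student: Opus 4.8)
The plan is to mimic the strategy of Proposition \ref{prop:rootclosure}, but now landing in the Weyl algebra $A_r$ rather than $\Wr$, and then invoke Theorem \ref{thm:holonomic} to conclude holonomicity from strong $A_r$-finiteness. First I would fix $\omega$ a $p$-th root of unity and examine what happens to a recurrence relation $P\cdot f=0$ under the evaluation $q\mapsto\omega$. The obstruction is immediate: a generic $P\in\Wr$ has coefficients in $\C(q)$ that may have poles at $q=\omega$, and even after clearing denominators the specialized operator $P|_{q=\omega}$ need not be the right kind of object — the commutation relation $LM=qML$ degenerates to $LM=\omega ML$, which is \emph{not} the relation $l m - m l = 1$ of $A_r$. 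So the naive specialization produces an operator in a quantum torus at a root of unity, not a differential-type operator. The key idea to get around this is the standard one (used e.g. in \cite{Gar04} and \cite{GvdV12}): one must extract the \emph{leading order term} in the expansion of $P$ near $q=\omega$, suitably normalized, to recover a genuine element of $A_r$ annihilating $f(\omega)$.

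Concretely, here is the sequence of steps. (1) Given a subset $\mathcal{L}'\subseteq\{l_1,\dots,l_r,m_1,\dots,m_r\}$ of cardinality $r+1$, let $\mathcal{L}$ be the corresponding subset of $\{L_1,\dots,L_r,M_1,\dots,M_r\}$. By Corollary \ref{cor:equivcond}, $f$ is strongly $\Wr$-finite with multiplicities, so — exactly as in Proposition \ref{prop:rootclosure} — I can find $P\in\WrL$ annihilating $f$ in which every power of every $M_i$ appearing is a multiple of $p$. (2) Write $q = \omega e^{h}$ (or work with the substitution $q=\omega(1+h)$) and expand $P$ as a Laurent series in $h$ with coefficients operators built from the $L_i$ and from $M_i^{p}=q^{p n_i}$, noting $M_i^p|_{q=\omega e^h}=\omega^{pn_i}e^{p n_i h}=e^{p n_i h}$, so that $M_i^p$ becomes $\exp(p n_i h)=\sum_k (p n_i h)^k/k!$; the $n_i$-dependence is now polynomial at each order in $h$. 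Meanwhile $L_i$ is unchanged and acts as a shift. (3) Take the lowest-order nonzero coefficient in this $h$-expansion of $(P/h^{\mathrm{ord}})\cdot f(\omega)$; this coefficient is an operator in the shift operators $l_i$ and in polynomials in the $n_i$, i.e. an element of $A_r$ (after the standard identification of shift-and-polynomial operators with Weyl-algebra elements), and it only involves the variables in $\mathcal{L}'$. One checks it is nonzero by choosing $P$ of minimal order in $h$ among all such annihilators, or equivalently by a valuation argument. This produces the required recurrence relation for $f(\omega)$ in the variables $\mathcal{L}'$, so $f(\omega)$ is strongly $A_r$-finite, hence holonomic by Theorem \ref{thm:holonomic}.

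The main obstacle I expect is step (3): verifying that the leading-order operator is genuinely nonzero and genuinely annihilates $f(\omega)$, rather than vanishing identically or picking up spurious contributions from the denominators of the $\C(q)$-coefficients of $P$. The clean way to handle this is to first clear denominators so that $P$ has coefficients in $\C[q^{\pm1}]$, then observe that for each $\mathcal{L}'$ the space of such $P$ annihilating $f$ with $M_i$-exponents divisible by $p$ forms a module over $\C[q^{\pm1}]$, pass to the localization at the maximal ideal $(q-\omega)$, and extract the image in the residue construction; minimality of the $(q-\omega)$-adic valuation then guarantees nontriviality of the specialization. One should also double-check the bookkeeping that the divisibility-by-$p$ condition on the $M_i$-powers is exactly what is needed for $M_i^p$ to lose its $\omega^{pn_i}$ prefactor and hence for the limit to be well-defined — this is the analogue of the corresponding point in the proof of Proposition \ref{prop:rootclosure}. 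Everything else is the routine observation that a shift operator with polynomial-in-$\mathbf{n}$ coefficients supported on the variables of $\mathcal{L}'$ is precisely an element of $A_r$ in those variables.
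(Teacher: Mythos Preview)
Your proposal is correct and follows essentially the same route as the paper: extract an $A_r$-annihilator for $f(\omega)$ as the leading term of a $\Wr$-annihilator for $f$ expanded around $q=\omega$, then invoke Theorem \ref{thm:holonomic}. The only cosmetic difference is that the paper first reduces to $\omega=1$ via Proposition \ref{prop:rootclosure} and then implements the expansion by iterating $D_q$ (tying the argument into Proposition \ref{prop:diffclosure}), whereas you stay at a general $\omega$ using the divisibility-by-$p$ trick on the $M_i$-exponents and a formal $h$-series; your packaging is arguably cleaner for the nontriviality step.
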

\begin{proof}
	By Proposition \ref{prop:rootclosure} we only need to prove this for $\omega=1$. We can assume that $f(1)_\mathbf{n}\neq 0$ for arbitrarily large $\mathbf{n}$, otherwise the statement is trivially true (since a sequence with finite support is always holonomic).
	
	 Fix a subset of the variables $I$ and $P\in \mathbb{W}_{r,I}$ such that $P\cdot f=0$. If $P$ becomes a non-zero polynomial in the $L$ variables after substituting $q=1$ and $M_i=1$ for all $i$'s, then $f(1)$ satisfies a recurrence relation in the variables $I$ (actually, just the $L$ variables among those) and we have found our desired recurrence. 
	 
	 If not, we can take the (trivial) sequence $P\cdot f$ and apply $D_q$ to it. We saw in the proof of Proposition \ref{prop:diffclosure} that $$(D_q\cdot q^kL^\alpha M^\beta f)_\mathbf{n}=(q^kL^\alpha M^\beta\cdot D_qf)_\mathbf{n}+\left(k+\sum_{j=1}^r m_j\beta_j\right)q^{k-1}L^\alpha M^\beta f_{\mathbf{n}}$$
	where $m_j$ is the operator in $A_r$ of Definition \ref{dfn:weylalg}.
	Therefore $0=D_q (P \cdot f)_\mathbf{n}=(P\cdot D_qf)_\mathbf{n}+(R\cdot f)_\mathbf{n}$ where $R$ is an operator in the variables $I$ plus an extra $m_j$ for each $M_j$ in $I$. We can once again evaluate this sequence at $q=1, M_j=1$; the first summand is equal to $0$ by assumption, so if $R\neq 0$ when evaluated at $q=1, M_j=1$, we have a recurrence for $f(1)$ in the same variables $I$ with $m_j$ replacing $M_j$ for all $j$. If not, we can keep repeating this process by taking $q$-differentials; at some point we must obtain a non-zero recurrence relation. 
		
	To see this is the case, fix an $\alpha$ among the $L$-degrees of $P$ and consider the polynomial $P_\alpha$ of its coefficients in $q$ and $M$. Pick an $\mathbf{n}$ such that $f(1)_\mathbf{n}\neq 0$ and look at the polynomial in $q$ given by $P_\alpha\cdot f_\mathbf{n-\alpha}$: it must vanish in $1$ to order $s$, or in other words, it is equal to $\lambda_\alpha (q-1)^s+O((q-1)^{s+1})$. Then $D_q^s P_\alpha\cdot f_\mathbf{n-\alpha}$ must be non-zero when evaluated in $1$. This shows that in the above formula, after $s$ steps we must obtain a recurrence relation with at least one non-zero term.
	
\end{proof}

\subsection{Application to skein modules}\label{ssec:skein}

Throughout this section when we write $A$ we assume that $q=A^2$; therefore any $\C(A)$-module is also a $\C(q)$-module (and the same holds for $\Z[A,A^{-1}]$).

Skein modules were introduced independently in \cite{Prz1} and \cite{Tur}; for their definition and basic properties see for example \cite{Prz99}. We will also look at Reshetikhin-Turaev invariants \cite{RT91}; for an introduction and definitions we refer the reader to \cite{Lic97}.

Let $N$ be a closed oriented $3$-manifold and let $Sk(N)$ be its Kauffman bracket skein module with $\Z[A,A^{-1}]$ coefficients. Let $S_{r}(Sk(N))$ be the module of sequences $\Z^r\ra Sk(N)$, and let $S_r^{sk}(N)$ be $S_{r}(Sk(N))\otimes \C(A)$, where the tensor product is of $\Z[A,A^{-1}]$-modules. Then $S_r^{sk}(N)$ is a $\mathbb{W}_{r}$-module, with $q$ acting like multiplication by $A^2$. 

Given a framed knot $K\subseteq N$ and a polynomial $P\in \Z[A,A^{-1}][x]$, we can define the element $P(K)\in Sk(N)$ first by defining $K^n$ as $n$ parallel copies of $K$, and the rest linearly. Similarly given a framed link with $r$ ordered components $K\subseteq N$ and $r$ polynomials $\mathbb{P}:=(P_1,\dots,P_r)$, we can define the  element $P(K)\in Sk(N)$ by doing this process on each component and expanding multilinearly.

\begin{dfn}
		Let $N$ be a compact oriented $3$-manifold and $K\subseteq N$ be a framed link with $r$ components. Given an $r$-tuple $\mathbf{n}=(n_1,\dots,n_r)\in \Z^r$ we can define the \emph{colored Jones skein} of $K$, denoted with $J_{\mathbf{n}}^K$, as $(-1)^{n_1+\dots+n_r-r}T(K)$ where $T$ is the $r$-tuple $(T_{n_1-1},\dots,T_{n_r-1})$, and $T_i$ is the $i$-th Chebyshev polynomial of the second kind, defined by the recurrence relations
		
			$$\begin{cases}
			T_n(x)=x T_{n-1}(x)-T_{n-2}(x)\\
			T_1(x)=x\\
			T_0(x)=1
		\end{cases}
		$$
\end{dfn}

Let $\mathcal{U}$ be the set of roots of unity, and $\C^{\mathcal{U}}$ the algebra of (set) functions from $\mathcal{U}$ to $\C$. The set of sequences in $\C^{\mathcal{U}}$ is not a $\C(q)$-vector space (and thus not a $\Wr$-module), but merely a $\C[q,q^{-1}]$ module, with $q$ acting by the identity map. Notice that the identity map $\mathcal{U}\ra \C$ is not the identity of the algebra $\C^{\mathcal{U}}$, which is the map sending every root of unity to $1$. We can turn $S_r(\C^\mathcal{U})$ into a $\C(q)$-vector space and a $\Wr$-module $\widehat{S_r(\C^{\mathcal{U}})}$ by tensoring it with $\C(q)$. 

We could also have proceeded by turning $\C^{\mathcal{U}}$ into a $\C(q)$-vector space directly, by tensoring it in the same manner. We denote this by $\widehat{\C^{\mathcal{U}}}$. The difference between the two objects is that a sequence $f_{\mathbf{n}}$ in $\widehat{S_r(\C^{\mathcal{U}})}$ is trivial if and only if there is a finite set in $\mathcal{U}$ such that for every $\mathbf{n}$, $f_{\mathbf{n}}=0$ outside of this set, whereas a sequence in $S_r(\widehat{C^\mathcal{U}})$ is trivial if and only if for every $\mathbf{n}$, $f_{\mathbf{n}}=0$ outside a finite set.

\begin{dfn}
	Let $K\subseteq N$ be a framed, $r$-component link. Then the sequence $RT(M,L,\mathbf{n})\in \widehat{S_r(\C^\mathcal{U})}$ is defined as 
	
	$$f_\mathbf{n}^{M,L}(\zeta):= (-1)^{|\mathbf{n}-\mathbf{1}|}RT_r(M,L,\mathbf{n}-\mathbf{1},\zeta)$$
	where $RT_r(M,L,\mathbf{n}-\mathbf{1},\zeta)$ is the relative Reshetikhin-Turaev invariant of the pair $(M,L)$ with colors $\mathbf{n}-\mathbf{1}$ (component-wise) at the $r$-th root of unity $\zeta$. This definition works if all components of $\mathbf{n}$ are positive; it can be extended oddly in the general case. 
\end{dfn}
\begin{obs}
	The sign and the shift by $1$ in the indices defining $J_{\mathbf{n}}^K$ is done purely to have the notation agree with the notation usually employed in the statement of the AJ conjecture.
\end{obs}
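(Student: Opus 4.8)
Strictly speaking this is a notational remark rather than a proposition, so there is nothing to \emph{prove}; what one can do is \emph{verify} that the sign $(-1)^{n_1+\dots+n_r-r}$ and the index shift $T_{n_i-1}$ built into $J_{\mathbf{n}}^K$ reproduce the normalization standard in statements of the AJ conjecture. The plan is to set up the dictionary between the skein-theoretic definition used here and the representation-theoretic colored Jones polynomial, and then to check that the two conventions agree on a reference example.

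First I would recall the normalization adopted in the AJ conjecture literature (following \cite{Gar04}): there the color $n$ labels the $n$-dimensional irreducible representation of $U_q(\mathfrak{sl}_2)$, so the trivial color is $n=1$, and the colored Jones invariant of the unknot colored by $n$ is the quantum integer $[n]=\frac{A^{2n}-A^{-2n}}{A^2-A^{-2}}$. This pins down both the indexing (by dimension, whence a shift by $1$ relative to the Chebyshev index) and the overall sign.

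Second I would recall the skein side. Under the Chebyshev parametrization the $n$-dimensional representation is realized by the $(n-1)$-st Jones--Wenzl idempotent, whose value on a knot equals $T_{n-1}$ applied to the core curve (that is, to $n-1$ parallel copies expanded via the Chebyshev recurrence); since the skein-theoretic unknot evaluates to $-A^2-A^{-2}$ rather than $+[2]$, one computes $T_{n-1}(-A^2-A^{-2})=(-1)^{n-1}[n]$, so a sign $(-1)^{n-1}$ is needed per component to cancel this. Collecting one such sign for each of the $r$ components yields exactly the global factor $(-1)^{n_1+\dots+n_r-r}=\prod_{i=1}^r(-1)^{n_i-1}$ together with the shifted indices $T_{n_i-1}$ appearing in the definition.

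Finally I would confirm the dictionary on the unknot $K$ in $S^3$: evaluating $J_n^K$ directly gives $(-1)^{n-1}T_{n-1}(-A^2-A^{-2})=[n]$, which simultaneously checks that the trivial color sits at $n=1$ and that the sign produces a quantum integer with positive leading term, matching the AJ convention; the analogous $(-1)^{|\mathbf{n}-\mathbf{1}|}$ and shift by $\mathbf{1}$ in the relative Reshetikhin--Turaev definition are explained identically. I expect no genuine obstacle here, since the claim is a bookkeeping identity; the only real hazard is the well-known proliferation of sign and indexing conventions across the skein-theoretic and quantum-group literatures, which must be tracked with care so that every $(-1)^{n_i-1}$ and every shift is accounted for consistently.
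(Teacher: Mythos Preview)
Your proposal is correct, and indeed supplies more than the paper does: in the paper this is a bare remark with no accompanying justification, so there is no ``paper's own proof'' to compare against. Your dictionary between the Chebyshev/skein normalization and the dimension-indexed representation-theoretic one is the right explanation, and the unknot check $(-1)^{n-1}T_{n-1}(-A^2-A^{-2})=[n]$ is the standard way to pin down both the shift and the sign; nothing further is required.
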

\begin{thm}\label{thm:skeinqholonomic}
	For any closed manifold $N$ and any $r$-component framed link $K\subseteq N$, the sequence $J_\mathbf{n}^K\in S_r^{sk}(N)$ is $q$-holonomic.
\end{thm}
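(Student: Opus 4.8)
The plan is to use Theorem~\ref{thm:main}: it suffices to show that $J_\mathbf{n}^K$ is strongly $\Wr$-finite, i.e.\ that for every subset $\mathcal{L}\subseteq\{L_1,\dots,L_r,M_1,\dots,M_r\}$ of cardinality $r+1$ there is a nonzero $P\in\WrL$ annihilating $J_\mathbf{n}^K$ in $S_r^{sk}(N)$. The key structural fact is that $Sk(N)$ is a finitely generated $\Z[A,A^{-1}]$-module (a known property of skein modules of closed oriented $3$-manifolds; after tensoring with $\C(A)$ it is a finite-dimensional $\C(A)$-vector space, say of dimension $D$). The idea is to combine this finiteness with the recursion defining the Chebyshev polynomials to produce, for each individual variable, a recursion of controlled degree, and then to intersect those recursions against a linear-algebra dimension count on $\WrL$.

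First I would record the recursion in each variable. Writing $K_i$ for the $i$-th component, the colored Jones skein in variable $n_i$ is, up to a sign, $T_{n_i-1}(K_i)$ evaluated by cabling the $i$-th component; the Chebyshev recurrence $T_n = xT_{n-1}-T_{n-2}$ translates directly into a degree-$2$ linear recurrence in $n_i$ with coefficients that are Laurent polynomials in $A$ (in fact constants, since the recursion coefficients are integers), whose effect on $Sk(N)$ is ``cable once more and resolve''. Because $Sk(N)\otimes\C(A)$ is $D$-dimensional, iterating the shift $L_i$ gives $D+1$ elements $J_\mathbf{n}^K, L_iJ_\mathbf{n}^K,\dots,L_i^D J_\mathbf{n}^K$ of a $D$-dimensional space, which must be $\C(A)$-linearly dependent; that dependence is a recursion in $L_i$ alone, but its coefficients a priori involve $M_1,\dots,M_r$ (through the cabling signs $(-1)^{|\mathbf{n}-\mathbf{1}|}$ and the dependence of the resolution on the colors). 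Then I would run the standard elimination/dimension argument: inside $\WrL$ the filtration $\mathcal{F}_N\WrL$ has dimension $\Theta(N^{r+1})$, while the span $\mathcal{F}_N\WrL\cdot J_\mathbf{n}^K$ lives in a space of dimension $O(N^r)$ — this is the point where the finite-generation of $Sk(N)$ and the bounded-degree single-variable recursions combine to force $\mathcal{F}_N\cdot J_\mathbf{n}^K$ to grow only polynomially of degree $r$ — so for $N$ large there is a nonzero $P\in\mathcal{F}_N\WrL$ with $P\cdot J_\mathbf{n}^K=0$. This gives strong $\Wr$-finiteness, hence $q$-holonomicity.

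Concretely, the dimension bound $\dim \mathcal{F}_N\WrL\cdot J_\mathbf{n}^K = O(N^r)$ is what needs care. Here is the mechanism: applying a monomial $L^{\boldsymbol\alpha}M^{\boldsymbol\beta}$ with $|\alpha|+|\beta|\le N$ to $J_\mathbf{n}^K$, one first uses the $M_i$'s freely — they only contribute a factor $q^{\boldsymbol\beta\cdot\mathbf{n}}$ and a sign — and then uses the Chebyshev recursion in each $L_i$ to rewrite $L_i^{\alpha_i}J_\mathbf{n}^K$ as a $\C(A)[q^{n_i}]$-combination of the finitely many ``basic cables'' coming from a fixed $\C(A)$-basis of $Sk(N)$; the number of independent such combinations grows like $N^r$ because each of the $r$ shift directions contributes one polynomial degree while the skein-module direction is bounded by $D$. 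This is morally the same count as in the $S^3$ case of \cite{GL05}, with $Sk(S^3)=\C(A)$ replaced by a finite-dimensional $Sk(N)$.

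The main obstacle is making the last paragraph rigorous: one must set up the bookkeeping so that applying high powers of the $L_i$ genuinely stays inside a space whose dimension is controlled — i.e.\ that cabling a component many times and resolving in $Sk(N)$ does not secretly reintroduce unbounded behavior in the $M$-variables or in $A$. The Chebyshev recursion handles the $L_i$-direction cleanly (coefficients are constants), and the only $M_i$-dependence is through the overall monomial $q^{\boldsymbol\beta\cdot\mathbf{n}}$ and the colorings entering the relative invariant; one has to check these interact with the skein relations in a way that still yields an $O(N^r)$ bound, rather than, say, $O(N^{r+1})$. If a direct count is delicate, an alternative is to prove strong $\Wr$-finiteness one $\mathcal{L}$ at a time by choosing, for the single variable \emph{not} in $\mathcal{L}$ among the $L$'s (or using the $M$'s to absorb signs), an explicit degree-$(D+1)$ recursion from the Cayley–Hamilton relation for the cabling endomorphism of $Sk(N)\otimes\C(A)$, and then eliminating; this reduces the problem to linear algebra over $\C(A)$ plus the elementary fact $\dim\mathcal{F}_N\WrL=\Theta(N^{r+1})$, avoiding any subtle growth estimate.
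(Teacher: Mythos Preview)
Your overall strategy --- reduce to strong $\Wr$-finiteness via Theorem~\ref{thm:main} --- matches the paper's. But your implementation has a genuine gap, and the alternative you propose at the end has the same gap.

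The problem is that finite-dimensionality of $Sk(N)\otimes\C(A)$ does not by itself yield a $\Wr$-recursion. When you say ``iterating the shift $L_i$ gives $D+1$ elements $J_\mathbf{n}^K,\dots,L_i^DJ_\mathbf{n}^K$ of a $D$-dimensional space'', those are elements of the sequence space $S_r^{sk}(N)$, which is infinite-dimensional; only their \emph{values} at each fixed $\mathbf{n}$ lie in a $D$-dimensional space. So you obtain, for each $\mathbf{n}$, a linear relation among $J_\mathbf{n}^K,\dots,J_{\mathbf{n}+D\delta_i}^K$ with coefficients in $\C(A)$ depending on $\mathbf{n}$ --- but nothing forces those coefficients to be polynomial in $q^{n_j}$, i.e.\ to come from $\Wr$. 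Your Cayley--Hamilton alternative has the same defect: there is no ``cabling endomorphism'' of $Sk(N)\otimes\C(A)$, because the gluing map $\C(A)[x_1,\dots,x_r]\to Sk(N)\otimes\C(A)$ is only $\C(A)$-linear, not a ring homomorphism, so multiplication by $x_i$ does not descend. The Chebyshev recursion $T_{n+1}=xT_n-T_{n-1}$ therefore relates $J_{\mathbf{n}+\delta_i}^K+J_{\mathbf{n}-\delta_i}^K$ to the image of $x_i\cdot T_{\mathbf{n}-\mathbf{1}}$, which is a new skein element, not an operator applied to $J_\mathbf{n}^K$.

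The paper resolves this by working not in $Sk(N)$ but in the exterior $N_K$: the skein module $Sk(N_K)$ is a module over the boundary skein algebra $Sk(\partial N_K)$, generated by meridians $\mu_i$ and longitudes $\lambda_i$, and these \emph{do} act as genuine endomorphisms --- $\mu_i$ acts (after the pairing with the solid tori) as $-(M_i+M_i^{-1})$ and $\lambda_i$ as $-(L_i+L_i^{-1})$. The needed input is then \cite[Corollary~1.7]{BD25}: any $r+1$ of the $\mu_i,\lambda_j$ satisfy a nontrivial polynomial relation annihilating the empty skein in $Sk(N_K)$. This is exactly the elimination property, and it does not follow from finite-dimensionality of $Sk(N)$ alone.
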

\begin{proof}
	Take $\mu_1,\dots,\mu_r$ the meridians and $\lambda_1,\dots,\lambda_r$ the longitudes (i.e. the parallel copies of each component) of $K$. These are generators for the algebra $Sk(\partial N_K)$ where $N_K$ is the exterior of $K$. The skein module $Sk(N)$ is a $Sk(\partial N)$-module, and there is a bilinear map $\langle\cdot,\cdot\rangle: Sk(\sqcup_r S^1\times D^2)\times Sk(N_K)\ra Sk(N)$, given by gluing and inclusion. In this context, $J_\mathbf{n}^K=\langle J_\mathbf{n}^c,\varnothing\rangle$ where $c$ is the link in $\sqcup_r S^1\times D^2$ comprising of all the core curves. Some standard calculations give that $$\langle J_\mathbf{n}^c\cdot \mu_i,\varnothing\rangle=(-A^{2n_i}-A^{-2n_i})\langle J_\mathbf{n}^c,\varnothing\rangle$$ and $$\langle J_\mathbf{n}^c\cdot \lambda_i,\varnothing\rangle=\langle -J_{\mathbf{n}+\delta_i}^c-J_{\mathbf{n}-\delta_i}^c,\varnothing\rangle.$$
	
	Notice that in standard references (for example in Section 9 of \cite{KL94}) these formulas are proved with positive colors, but the extension to negative colors is straightforward since $T_{-n}=-T_n$. Because of \cite[Corollary 1.7]{BD25}, for any choice of $r+1$ curves among the $\lambda_i$s and the $\mu_j$s, there is an ordered polynomial $P$ in those curves that is non-zero in $Sk(\partial N_K)$ but such that $P\cdot \varnothing=0$ in $Sk(N_K)$. Then this gives a recurrence relation for $J_\mathbf{n}^L$ in the variables corresponding to the curves, proving that the sequence is strongly $\Wr$-finite and thus $q$-holonomic by Theorem \ref{thm:main}.
	
\end{proof}

Theorem \ref{thm:skeinqholonomic} in particular provides a new proof that the colored Jones polynomial of a link in $S^3$ is $q$-holonomic, first proved in \cite{GL05} (this new proof in the case for knots was already implicit in \cite[Corollary 1.7]{BD25}). An extension of the AJ conjecture to any manifold will be explored in future work.

\begin{cor}\label{cor:rtqhol}
	For any framed link $K$ in a closed oriented $3$-manifold $N$, the sequence $RT(N,K,\mathbf{n})$ is $q$-holonomic.
\end{cor}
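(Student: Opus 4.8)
The plan is to reduce the statement about the Reshetikhin-Turaev invariants $RT(N,K,\mathbf{n})$ to Theorem \ref{thm:skeinqholonomic}, which already establishes that the colored Jones skein sequence $J_\mathbf{n}^K \in S_r^{sk}(N)$ is $q$-holonomic. The key observation is that $RT(N,K,\mathbf{n})$ is obtained from $J_\mathbf{n}^K$ by a single fixed linear operation that does not depend on $\mathbf{n}$: one evaluates the Kauffman bracket skein module element $J_\mathbf{n}^K \in Sk(N)$ against the surgery presentation of $N$ and divides by the appropriate normalization factor (the RT invariant of $N$ with the empty link). Concretely, if $N$ is obtained by surgery on a framed link $\mathcal{K}_0 \subseteq S^3$, then $RT(N,K,\mathbf{n})$ is (up to a $\mathbf{n}$-independent constant) the evaluation at a root of unity $\zeta$ of the image of $J_\mathbf{n}^{K \cup \mathcal{K}_0}$ under the linear functional on $Sk(S^3)$ that closes off the surgery components with the appropriate $\omega$-colorings.

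First I would make precise the claim that there is a $\C(A)$-linear (equivalently $\C(q)$-linear) map $\Phi : S_r^{sk}(N) \ra \widehat{S_r(\C^\mathcal{U})}$ such that $\Phi(J_\mathbf{n}^K) = f_\mathbf{n}^{M,L}$ (the sequence defining $RT$), and that this map commutes with the $\Wr$-action, i.e.\ it is a morphism of $\Wr$-modules. The commutation with $L_i$ and $M_i$ is immediate: $\Phi$ acts only "internally" on each term $J_\mathbf{n}^K$ of the sequence, while $L_i$ shifts the index $n_i$ and $M_i$ multiplies by $q^{n_i} = A^{2n_i}$, and both of these operations manifestly commute with applying a fixed linear functional to each term. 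Here one uses that $q$ acts on $S_r^{sk}(N)$ by multiplication by $A^2$ and on $\widehat{S_r(\C^\mathcal{U})}$ after tensoring, so the target is genuinely a $\Wr$-module as noted in the text.

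Next I would invoke the standard fact that the image of a $q$-holonomic element under a morphism of $\Wr$-modules is $q$-holonomic: if $P \cdot J_\mathbf{n}^K = 0$ in $S_r^{sk}(N)$ then $P \cdot \Phi(J_\mathbf{n}^K) = \Phi(P \cdot J_\mathbf{n}^K) = 0$, so every recurrence relation satisfied by $J_\mathbf{n}^K$ is inherited by $RT(N,K,\mathbf{n})$; in particular $RT(N,K,\mathbf{n})$ is strongly $\Wr$-finite and hence $q$-holonomic by Theorem \ref{thm:main} (or one may cite directly that quotients and submodules of $q$-holonomic modules are $q$-holonomic, which is \cite[Proposition 3.3 and 3.5]{GL16}, applied to the cyclic submodule generated by $\Phi(J_\mathbf{n}^K)$, a quotient of the cyclic module generated by $J_\mathbf{n}^K$). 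The main obstacle I anticipate is purely bookkeeping: one must verify that the RT invariant really is given by such an $\mathbf{n}$-independent functional applied to the colored Jones skein — this requires carefully matching the normalization conventions (the sign $(-1)^{|\mathbf{n}-\mathbf{1}|}$, the shift by $\mathbf{1}$, the choice of root of unity, and the surgery normalization factor) between the definition of $J_\mathbf{n}^K$ via Chebyshev polynomials in $Sk(N)$ and the definition of $f_\mathbf{n}^{M,L}$ via the relative RT invariant. None of this is conceptually difficult, but it is the only place where genuine care is needed; everything else is a formal consequence of $\Phi$ being a $\Wr$-module map together with Theorem \ref{thm:skeinqholonomic}.
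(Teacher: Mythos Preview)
Your proposal is correct and follows essentially the same route as the paper: construct a $\Wr$-module map from $S_r^{sk}(N)$ to $\widehat{S_r(\C^{\mathcal{U}})}$ sending $J_\mathbf{n}^K$ to the RT sequence, and then transfer $q$-holonomicity from Theorem \ref{thm:skeinqholonomic} using that annihilators are preserved (equivalently, that images of $q$-holonomic modules are $q$-holonomic). The only difference is cosmetic: the paper invokes this map by name (the Gilmer--Masbaum moment map, defined directly on $Sk(N)$) rather than describing it through a surgery presentation, which spares the normalization bookkeeping you flagged as the main obstacle.
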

\begin{proof}
	The Gilmer-Masbaum moment map associates to any skein in $Sk(N)$ the function in $\C^\mathcal{U}$ sending a root of unity to the Reshetikhin-Turaev invariant of the skein evaluated at that root of unity. Applying the map to each value in a sequence induces a map of $\Wr$-modules $S_r^{sk}(N)\ra \widehat{S_r(\C^\mathcal{U})}$. This immediately proves that $RT(N,K,\mathbf{n})$ is strongly $\Wr$-finite, since any non-zero recurrence in $S_r^{sk}(N)$ is sent to a non-zero recurrence in $S_r(\C^{\mathcal{U}})$ in the same variables (alternatively, the image of a $q$-holonomic module is $q$-holonomic).
\end{proof}
\begin{obs}
	Theorem \ref{thm:skeinqholonomic} works just as well when considering $J_\mathbf{n}^K$ as an element of $S_r(Sk(N,\Q(A)))$ instead; in this case Corollary \ref{cor:rtqhol} says that $RT(N,K,\mathbf{n})$ is $q$-holonomic as a sequence in $S_r(\widehat{\C^\mathcal{U}})$. 
\end{obs}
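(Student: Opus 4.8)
The plan is to realize $RT(N,K,\mathbf{n})$ as the image of the colored Jones skein $J_\mathbf{n}^K$ under a homomorphism of $\Wr$-modules, deduce that it inherits the strong $\Wr$-finiteness of $J_\mathbf{n}^K$, and conclude with Theorem~\ref{thm:main}.

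First I would construct the Gilmer--Masbaum moment map. For a closed oriented $3$-manifold $N$, evaluating a skein $x\in Sk(N)$ by the Reshetikhin--Turaev functional at each root of unity $\zeta$ produces a function $\zeta\mapsto\langle x\rangle_\zeta$, i.e.\ an element of $\C^\mathcal{U}$; this assignment is $\Z[A,A^{-1}]$-linear, so applying it termwise to sequences and tensoring with $\C(q)$ (recall $q=A^2$) gives a $\C(q)$-linear map $\Phi\colon S_r^{sk}(N)\ra\widehat{S_r(\C^\mathcal{U})}$. The key point is that $\Phi$ intertwines the $\Wr$-actions: on the source $M_i$ acts by multiplication by $q^{n_i}=A^{2n_i}$ and on the target by multiplication by $q^{n_i}$, where $q$ is the tautological function on $\mathcal{U}$, and these correspond under evaluation at $\zeta$; meanwhile $L_i$ is the $i$-th coordinate shift on both sides. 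Thus $\Phi$ commutes with every generator of $\Wr$ straight from the definitions. Second, I would check that $\Phi(J_\mathbf{n}^K)=RT(N,K,\mathbf{n})$: unwinding $J_\mathbf{n}^K=(-1)^{|\mathbf{n}-\mathbf{1}|}T(K)$ with $T$ the Chebyshev cabling, and $RT(N,K,\mathbf{n})=(-1)^{|\mathbf{n}-\mathbf{1}|}$ times the relative Reshetikhin--Turaev invariant of $(N,K)$ with colors $\mathbf{n}-\mathbf{1}$, the sign and the shift by $\mathbf{1}$ match by construction, and evaluating the RT functional on $T(K)\in Sk(N)$ is exactly that relative invariant.

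With $\Phi$ in hand the conclusion is short. By Theorem~\ref{thm:skeinqholonomic}, $J_\mathbf{n}^K$ is $q$-holonomic, hence strongly $\Wr$-finite by Corollary~\ref{cor:equivcond}; so for every subset $\mathcal{L}\subseteq\{L_1,\dots,L_r,M_1,\dots,M_r\}$ of cardinality $r+1$ there is $P\in\WrL$ with $P\cdot J_\mathbf{n}^K=0$. Applying $\Phi$ yields $P\cdot RT(N,K,\mathbf{n})=\Phi(P\cdot J_\mathbf{n}^K)=0$ in the $\Wr$-module $\widehat{S_r(\C^\mathcal{U})}$, so $RT(N,K,\mathbf{n})$ is strongly $\Wr$-finite, and Theorem~\ref{thm:main} upgrades this to $q$-holonomicity. (Equivalently, $\Phi$ carries the $q$-holonomic module generated by $J_\mathbf{n}^K$ onto a quotient of it, and quotients of $q$-holonomic modules are $q$-holonomic.) The module-theoretic step is purely formal; the real work is in the construction of $\Phi$ --- that $x\mapsto\langle x\rangle_\zeta$ is well defined on all of $Sk(N)$ over $\Z[A,A^{-1}]$ and for all $\zeta\in\mathcal{U}$, that the normalizations relating $A$, $q$ and the evaluation point are consistent enough for $\Phi$ to be genuinely $\Wr$-linear, and that $\Phi$ sends $J_\mathbf{n}^K$ to $RT(N,K,\mathbf{n})$ on the nose. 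Once these bookkeeping points are settled, nothing beyond Theorems~\ref{thm:main} and~\ref{thm:skeinqholonomic} is required.
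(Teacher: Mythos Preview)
Your argument is correct and is essentially the paper's own: the proof of Corollary~\ref{cor:rtqhol} in the paper is precisely the construction of the Gilmer--Masbaum moment map as a $\Wr$-module homomorphism and the observation that recurrences (equivalently, $q$-holonomicity) push forward along it. One small point: the Remark concerns the variant where the source is $S_r(Sk(N,\Q(A)))$ and the target is $S_r(\widehat{\C^{\mathcal{U}}})$ (hat on $\C^{\mathcal{U}}$, not on the whole sequence space), whereas you wrote the map into $\widehat{S_r(\C^{\mathcal{U}})}$; the argument is unchanged, since one simply applies the moment map $Sk(N,\Q(A))\to\widehat{\C^{\mathcal{U}}}$ termwise to obtain the required $\Wr$-linear map of sequence spaces.
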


\bibliographystyle{alpha}
\bibliography{biblio}
\end{document}